\begin{document}
 
 \baselineskip 16pt
 \addtolength{\parskip}{8pt}


	\newtheorem{theorem}{Theorem}[section]
	\newtheorem{proposition}[theorem]{Proposition}
	\newtheorem{corollary}[theorem]{Corollary}
	\newtheorem{lemma}[theorem]{Lemma}
	\newtheorem{definition}[theorem]{Definition}	
	\newtheorem{assum}{Assumption}[section]
	\newtheorem{example}[theorem]{Example}
	\newtheorem{remark}[theorem]{Remark}
	\newtheorem*{conjecture}{Conjecture}

\newcommand{\R}{\mathbb{R}}
\newcommand{\Rn}{{\mathbb{R}^n}}
\newcommand{\T}{\mathbb{T}}
\newcommand{\Tm}{{\mathbb{T}^m}}
\newcommand{\TmRn}{{\mathbb{T}^m\times\mathbb{R}^n}}
\newcommand{\Z}{\mathbb{Z}}
\newcommand{\Zm}{{\mathbb{Z}^m}}
\newcommand{\C}{\mathbb{C}}
\newcommand{\Q}{\mathbb{Q}}
\newcommand{\N}{\mathbb{N}}

	\newcommand{\supp}{{\rm supp}{\hspace{.05cm}}}
	\newcommand {\lb}{{\langle}}
	\newcommand {\rb}{\rangle}
 	\numberwithin{equation}{section}

\title[On  smoothing estimates for    Schr\"odinger equation ]
	{On smoothing estimates for   Schr\"odinger equations
	  on  product spaces $\mathbb{T}^m\times \R^n$ }

	 \author[X.H. Chen, Z.H.  Guo, M.X.  Shen  and L.X. Yan]{Xianghong Chen, \  Zihua Guo, \ Minxing Shen\
 and \ Lixin Yan}
  \address{Xianghong Chen, Department of Mathematics, Sun Yat-sen
 University, Guangzhou, 510275, P.R. China}
 \email{chenxiangh@mail.sysu.edu.cn}
\address{Zihua Guo, School of Mathematical Sciences, Monash University, VIC 3800, Australia}
\email{zihua.guo@monash.edu}
\address{Minxing Shen, Department of Mathematics, Sun Yat-sen (Zhongshan)
University, Guangzhou, 510275, P.R. China}
\email{shenmx3@163.com}	
\address{
Lixin Yan, Department of Mathematics, Sun Yat-sen (Zhongshan) University, Guangzhou, 510275, P.R. China
}
\email{mcsylx@mail.sysu.edu.cn}

\date{\today}
 \subjclass[2010]{35J10, 35B45, 42B37.}
\keywords{ Schr\"odinger equation, local smoothing estimates, $\ell^2$-decoupling inequalities, Sobolev and  modulation spaces, product spaces. }

	\begin{abstract}
	Let $\Delta_{\mathbb{T}^m\times \R^n} $ denote  the Laplace-Beltrami operator on the product spaces 
	$\mathbb{T}^m\times \R^n$.  In this article we 
show that
\begin{eqnarray*}
	\left\|e^{it\Delta_{\mathbb{T}^m\times \R^n}}f\right\|_{L^p(\mathbb{T}^m\times \R^n\times [0,1])}
	\leq C \|f\|_{W^{\alpha,p}(\mathbb{T}^m\times\R^n)} 
\end{eqnarray*}
	holds if   $p\geq 2(m+n+2)/(m+n)$ and $\alpha> (m+2n)(1/2-1/p)-2/p$.   
	Furthermore, 
we  apply the $\ell^2$-decoupling inequalities to establish  local $L^p$-smoothing estimates for the Schr\"odinger 
operator $e^{it\Delta_\TmRn}$ in     modulation spaces   $M_{p,q}^\alpha(\TmRn)$: 
\begin{eqnarray*}
	\|e^{it\Delta_{\TmRn}}f\|_{L^p(\TmRn\times [0,1])}\leq C \|f\|_{M_{p,q}^\alpha(\TmRn)}
\end{eqnarray*}
	for some range of $\alpha$ and $p, q$. 
The smoothing estimates in $L^p$-Sobolev and modulation spaces
    are sharp up to the endpoint regularity, in a certain range of $p$ and $q$. 
	\end{abstract}

	\maketitle

\section{Introduction}
\setcounter{equation}{0}

Consider the   Schr\"odinger equation
\begin{eqnarray} \label{e1.0}
\left\{
\begin{array}{rrr}
	i\partial_t u(x,t)+\Delta_\Rn u(x,t)&=  0, &\quad (x,t)\in\R^{n+1},\\[4pt]
u(\cdot,0)&= f. &
\end{array}
\right.
\end{eqnarray}
The solution $u$ can be formally written as
$$u(x,t)=
e^{it\Delta_{\R^n}}f(x)= \int_{\R^n} e^{i (\xi\cdot x- t|\xi|^2)}\hat{f}(\xi)\,d\xi,
$$
where $\hat{f}$ denotes the Fourier transform of $f$. For fixed $t>0$, Miyachi \cite{Mi} proved that
\begin{align}\label{Mi}
	\big\|e^{it\Delta_{\R^n}}f\big\|_{L^p(\R^n)}
	\leq C(n,p)(1+t)^{\frac\alpha2} \|f\|_{W^{\alpha,p}(\R^n)},\quad \alpha\geq  2n\Big|\frac12-\frac1p\Big|
\end{align}
holds for $1<p<\infty$,  where $W^{\alpha,p}(\R^n)$ denotes the Sobolev space. Moreover, the regularity 
exponent $\alpha$ in \eqref{Mi} is sharp. On the other hand, it is expected that for `generic' $t$, 
the exponent $\alpha$ in \eqref{Mi} can be reduced. In fact, it has been conjectured that
\begin{align}\label{RLS1}
	\|e^{it\Delta_{\R^n}}f\|_{L^p(\R^n\times [0,1])}\leq C(n,p,\alpha)\|f\|_{W^{\alpha,p}(\R^n)},\quad \alpha>  2n\Big(\frac12-\frac1p\Big)-\frac2p
\end{align}
holds for \textit{all} $p>2+ 2/n$. This is known as the \textit{local smoothing conjecture for the 
	Schr\"odinger equation}. It suggests that for `generic' $t\in[0,1]$, the solution gains $2/p$ derivative 
compared to the fixed-time estimate \eqref{Mi}.
In \cite{Ro}, Rogers showed that the Fourier restriction conjecture for the paraboloid implies the local
smoothing conjecture \eqref{RLS1}; moreover, he showed that \eqref{RLS1} holds for $p>2+ {4}/{(n+1)}$ 
by using the Fourier restriction theorem of Tao \cite{Ta}. 
 We refer to \cite{GaMiaZh} for recent progress 
 on the local smoothing conjecture \eqref{RLS1}. 
For background on local smoothing type estimates, we refer 
to \cite{So} (see also \cite{GuWaZha} for recent progress on the corresponding problem for the wave equation).

In this article,  we are interested the analog of the inequality \eqref{RLS1} in the semiperiodic setting 
 ${\mathbb T^m}\times {\mathbb R^n}, m,n\geq 1,$ i.e., 
\begin{eqnarray} \label{e1.111}	
	i\partial_t u(x, y, t)+\Delta_\TmRn u(x, y, t)
	= 0
\end{eqnarray}
with the initial data
\begin{eqnarray} \label{e1.211}
u(x, y, 0)=f(x,y).
\end{eqnarray}
In this case, 
  $(x,y, t)\in \TmRn \times {\mathbb R}$, and 
$\Delta_\TmRn = \Delta_x +\Delta_y$ with   $\Delta_x$ is the Laplace-Beltrami operator on 
$\mathbb{T}^m$ and  $\Delta_y=\sum_{j=1}^n\partial^2_{y_j}$ is the Laplace  operator on 
$\mathbb{R}^n$.  The solution of \eqref{e1.111} and \eqref{e1.211}   is given by
\begin{eqnarray}\label{e1.1111}
	u(x,y,t) &=&e^{it\Delta_\TmRn}f(x,y) 
	=\sum_{k\in\mathbb{Z}^m} 
	\int_{\R^n} e^{i(k\cdot x +\xi\cdot y)}e^{ -it(   |k|^2+ |\xi|^2)} \hat{f}(k,\xi)\,d\xi.
\end{eqnarray}
Let $2\leq p\leq \infty$ and set $ s(p, m)=m\big({1/2}- {1/p}\big)$. From  the imbeddings $W^{s(p, m), p}(\mathbb T^m) \hookrightarrow W^{s(p, m), 2}(\mathbb T^m) 
\hookrightarrow  L^p(\mathbb T^m)$ and
the identity   $  \|   e^{it \Delta_{\mathbb T^m}}g \|_{ W^{s(p, m), 2}(\mathbb T^m)  }
=\|   g \|_{ W^{s(p, m), 2}(\mathbb T^m)  }$, which is clear since  $e^{it \Delta_{\mathbb T^m}}$  is an
isometry   on   $L^2(\mathbb T^m)$, we have the estimate   for $2\leq  p<\infty$,   
\begin{align}\label{temp}
	\big\|e^{it\Delta_{\mathbb{T}^m}}g\big\|_{L^p(\mathbb{T}^m)}\leq C(m,p)  \|g\|_{W^{s(p, m),p}(\mathbb{T}^m)}.
\end{align}
By writing
\begin{eqnarray}
 \label{commute}
	e^{it\Delta_\TmRn}f(x,y)= e^{it\Delta_{\R^n}}(e^{it\Delta_{\mathbb{T}^m}}f(x,\cdot))(y),
\end{eqnarray}
we then apply \eqref{temp} and  \eqref{Mi} to see that 
\begin{align}\label{fixtime}
	\big\|e^{it\Delta_\TmRn}f\big\|_{L^p(\mathbb{T}^m\times \R^n)} \leq C(m,n,p,t,\alpha) \|f\|_{W^{\alpha,p}(\mathbb{T}^m\times \R^n)},\quad
	\alpha\geq (m+2n)\Big|\frac12-\frac1p\Big|.
\end{align} 
It turns out that for `generic' $t\in[0,1]$, the solution also gains $2/p$ derivative 
  compared to the fixed-time estimate \eqref{fixtime}, at least for certain range of $p$. Indeed, by the 
  Strichartz estimates on $\Tm$ (\cite{Bo93, BoDe, KV}) and H\"older's inequality, one has
  that   for $p>2+ 4/m$,
  \begin{align}\label{BDintro}
  	\big\|e^{it\Delta_{\Tm}}f\big\|_{L^p(\Tm\times [0,1])}\leq C(m,p)  \|f\|_{W^{s, p}(\Tm)}, \ \ \ 
  	s \geq  m\Big(\frac12-\frac1p\Big)-\frac2p.
  \end{align}
  Combining \eqref{Mi} and \eqref{BDintro},  we have that  for $p>2+ 4/m$,
  \begin{align*}
  	\big\|e^{it\Delta_\TmRn}f\big\|_{L^p(\mathbb{T}^m\times \R^n\times [0,1])}
  	&\leq C(m,n,p)  \|f\|_{W^{\alpha,p}(\mathbb{T}^m\times \R^n)}, \ \ \ \alpha \geq (m+2n)\Big(\frac12-\frac1p\Big)-\frac2p.
  \end{align*}

The first aim of this article is to   extend  this range to $p\geq 2+ 4/({m+n})$. More precisely, we have the following result.

\begin{theorem}\label{th1.1}
	Let $p\geq 2+4/({m+n})$  and let $\alpha> (m+2n)(1/2-1/p)-2/p$. Then  there is a constant $C=C(m,n,p,\alpha)$ such   
\begin{eqnarray}\label{e1.2}
	\big\|e^{it\Delta_\TmRn}f\big\|_{L^p(\mathbb{T}^m\times \R^n\times [0,1])} \leq C \|f\|_{W^{\alpha,p}(\mathbb{T}^m\times\R^n)}.
\end{eqnarray}
Moreover, \eqref{e1.2} fails when $0\le\alpha< (m+2n)(1/2-1/p)-2/p$.
\end{theorem}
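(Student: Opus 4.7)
My plan is to combine a Littlewood--Paley reduction with the Bourgain--Demeter $\ell^2$-decoupling inequality for the $(m+n)$-dimensional paraboloid in $\mathbb{R}^{m+n+1}$; the stated range $p\ge 2(m+n+2)/(m+n)$ is exactly where this decoupling yields only an $\epsilon$-loss. A standard dyadic decomposition in the frequency variable $(k,\xi)\in\Zm\times\Rn$ and the Littlewood--Paley square-function characterization of $W^{\alpha,p}(\TmRn)$ reduce \eqref{e1.2} to the single-scale estimate
\begin{equation*}
\|e^{it\Delta_\TmRn}f_N\|_{L^p(\TmRn\times[0,1])}\le C_\epsilon N^{\alpha_0+\epsilon}\|f_N\|_{L^p(\TmRn)}
\end{equation*}
for $f_N$ supported on the frequency annulus $|(k,\xi)|\sim N$, with $\alpha_0=(m+2n)(1/2-1/p)-2/p$; dyadic summation then absorbs $N^\epsilon$ into the strict inequality $\alpha>\alpha_0$.

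To invoke the Euclidean decoupling, I would multiply $u_N=e^{it\Delta}f_N$ by smooth cutoffs supported on a single fundamental domain of $\Tm$ (in $x$) and on $[-1,2]$ (in $t$), and view the product as a function on $\R^{m+n+1}$. Its space-time Fourier transform then lives in an $O(1)$-neighborhood of the paraboloid $\{(\xi,|\xi|^2):|\xi|\sim N\}$, and after parabolic rescaling the Bourgain--Demeter theorem gives
\begin{equation*}
\|u_N\|_{L^p(\TmRn\times[0,1])}\le C_\epsilon N^\epsilon\Big(\sum_\tau\|u_\tau\|_{L^p(\TmRn\times[0,1])}^2\Big)^{1/2},
\end{equation*}
where $\tau$ ranges over cubes of side $\sim N^{1/2}$ in $\Zm\times\Rn$ partitioning the annulus. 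For each cube centered at $(k_0,\xi_0)$, a Galilean transform $(x,y,t)\mapsto(x-2k_0 t,y-2\xi_0 t,t)$ with modulation by $e^{i(k_0\cdot x+\xi_0\cdot y-t|k_0|^2-t|\xi_0|^2)}$ (which preserves the torus lattice because $k_0\in\Zm$) reduces $u_\tau$ to $v=e^{it\Delta}g$ with $\widehat g$ supported in a cube of side $N^{1/2}$ about the origin. The fixed-time estimate \eqref{fixtime} together with Bernstein on this cube yields $\|v(\cdot,t)\|_{L^p(\TmRn)}\le C\,N^{(m+2n)(1/2-1/p)/2}\|g\|_{L^p}$ uniformly for $t\in[0,1]$, and I would reassemble the $\ell^2$ sum via the $L^p$ Littlewood--Paley square-function estimate for the cube partition and H\"older's inequality (with the $\sim N^{(m+n)/2}$ cubes in the annulus).

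The sharpness half follows from a Knapp-type counterexample: take $\widehat f$ concentrated on a thin rectangular slab tangent to the paraboloid along a direction mixing torus and Euclidean variables, and compute both $\|e^{it\Delta}f\|_{L^p(\TmRn\times[0,1])}$ and $\|f\|_{W^{\alpha,p}}$ directly to saturate the exponent $\alpha_0$. The main technical difficulty I anticipate lies in the exponent balancing of the reassembly step: the crude combination of the single-cube bound above with the $\ell^2\to\ell^p$ H\"older loss from the number of cubes does not immediately match $\alpha_0$ across the whole range $p\ge 2(m+n+2)/(m+n)$, so a refinement will be necessary — perhaps time-slicing at the scale $[j/N,(j+1)/N]$ so that the phase $e^{-it(|k|^2+|\xi|^2)}$ is essentially constant on each cube, separating the torus Bernstein contribution $N^{m(1/2-1/p)/2}$ from the Euclidean Miyachi contribution $N^{n(1/2-1/p)}$, or using an anisotropic/iterated decoupling adapted to the mixed geometry — in order to recover exactly $\alpha_0$ and interpolate between the endpoint cases $m=0$ (Rogers' Euclidean result) and $n=0$ (Bourgain's Strichartz on $\Tm$).
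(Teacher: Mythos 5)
Your single-cube and reassembly step do not close, and the gap is quantitative, not merely a matter of refinement. After decoupling into $\sim N^{(m+n)/2}$ cubes $\tau$ of side $N^{1/2}$ and applying the fixed-time estimate plus Bernstein on each cube (giving $\|u_\tau\|_{L^p}\lesssim N^{(m+2n)(1/2-1/p)/2}\|g_\tau\|_{L^p}$), the best reassembly available for $p\ge 2$ is H\"older from $\ell^p$ to $\ell^2$ over the cubes, which costs another factor $N^{(m+n)(1/2-1/p)/2}$. This leaves a total exponent $(2m+3n)(1/2-1/p)/2$, and comparing with $\alpha_0=(m+2n)(1/2-1/p)-2/p$ one finds the argument only closes when $p\ge 2(n+4)/n$ — for example $p\ge 10$ when $m=n=1$, versus the required $p\ge 6$. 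The neither the time-slicing nor the anisotropic-decoupling suggestions you offer at the end obviously repair this, since the loss comes from trading $\ell^2(L^p)$ of the decoupled pieces for $L^p$ of the full data, which is a genuine mismatch when the data are measured in $L^p$ rather than $L^2$. The paper avoids this by splitting the two tasks: Barron's Strichartz estimate (Proposition \ref{prop3.1}, itself proved via decoupling) gives $\|e^{it\Delta}f\|_{L^p}\lesssim R^{(m+n)(1/2-1/p)-2/p+\varepsilon}\|f\|_{L^2}$ for $R$-bandlimited $f$, and then a localization lemma (Lemma \ref{localizaiton2}, the cylindrical analogue of Rogers' Lemma 8) upgrades the data from $L^2$ to $L^p$ at a cost of only $R^{n(1/2-1/p)}$ — H\"older is applied only in the $\mathbb R^n$ variable, because $\mathbb T^m$ is already compact, and the uncertainty principle localizes $f$ to a ball of radius $\sim R^{1+\varepsilon}$ in $y$. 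These two exponents add to exactly $\alpha_0$.

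A secondary remark on sharpness: a Knapp-type example is the wrong shape here. The lower bound $\alpha_0$ is attained by a \emph{focusing} example, and in the semiperiodic setting the construction must exploit the $2\pi$-periodicity of $e^{it\Delta_{\mathbb T^m}}$ in $t$ together with the semiclassical dispersion estimate of Burq--G\'erard--Tzvetkov on the torus at time $\varepsilon_0 h$, paired with ordinary Euclidean dispersion on $\mathbb R^n$ (see the proof of Proposition \ref{prop1.3}). A tangent rectangular slab, evaluated at fixed $t$, will not produce the $2/p$ gain in the time integration, nor the mixed exponent $(m+n)(1/2-1/q)+n(1/2-1/p)$ that Proposition \ref{prop1.3}(i) actually yields.
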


 To prove Theorem~\ref{th1.1}, we combine ideas of Rogers \cite{Ro} 
and sharp Strichartz-type  estimates for solutions to the Schr\"odinger equation \eqref{e1.111} on $\TmRn$ due to Barron  \cite{B}. 
Note that $(m+2n)(1/2-1/p)-2/p=0$ when $p=2+4/({m+2n})$. 
It may be possible to extend the estimate \eqref{e1.2} 
for the  range to $p> 2+ 4/(m+2n)$, however, it is not clear for us yet.

The second aim of this article is to apply  the $\ell^2$-decoupling inequalities of Bourgain and Demeter \cite{BoDe} to obtain the local $L^p$-smoothing estimates for the Schr\"odinger equation   by working with a different space of initial data in modulation spaces,  which 
are compared to  initial data in Sobolev space  in \eqref{e1.2}.     
Recall that modulation spaces was introduced by  Feichtinger \cite{F1}. 
  The local  $L^p$-smoothing estimates  for the Schr\"odinger equation \eqref{e1.0} in modulation spaces  were  first discussed by Schippa    in  \cite{Sc}.   In the last decades, modulation spaces in the context of Fourier multipliers and Schr\"odinger equations have been extensively studied, see for examples \cite{BGOR, BC, Sc, WHHG} and the references therein.

 For the definition of modulation spaces,   consider the Fourier multipliers
$$
\widehat{\square_K f}= \sigma_K \hat f, \quad K\in\Z^{m+n},
$$
with $\{\sigma_K\}_{K\in\Z^{m+n}}\subseteq C_c^{\infty}({\mathbb R^{m+n}})$ a smooth partition of unity, adapted to the translated unit cubes $Q_K=K+[-{1\over 2}, {1\over 2})^{m+n}$.  Without loss of generality, we assume that  $\rm{supp}\, \sigma_0\subset [-{3\over 4}, {3\over 4})$$^{m+n}$,     $\sigma_K(\cdot)= \sigma_0(\cdot-K)$, and so 
$$
	\sigma_K f(x,y)= \int_{|\xi-k^\prime|\leq 1} \sigma(k,\xi)\hat f(k,\xi) e^{i(k\cdot x+\xi \cdot y)}\,d\xi,
$$
where $K=(k,k^\prime)\in\Zm\times\Z^n$.
Let $0<p,q<\infty$ and $s\geq 0$.
Write $\langle K \rangle= (1+|K|^2)^{1/2}$. The norm is defined by
$$
\|f\|_{M_{p,q}^\alpha(\TmRn)}= \bigg( \sum_{K\in\Z^{m+n}} \langle K \rangle^{q\alpha}\|\square_K f\|_{L^p(\TmRn)}^q \bigg)^{1/q}.
$$
 If $\alpha=0$, we write $M_{p,q} (\TmRn)$.
Modulation spaces are closely related with $L^p$ spaces.
By Plancherel's theorem, we have that 
  $M_{2,2} (\TmRn)= L^2  (\TmRn)$.
  By the embedding of $\ell^p$-spaces and Bernstein's inequality,
\begin{eqnarray}
	\label{e1.12}
 M_{p,q_1}^\alpha(\TmRn)  \hookrightarrow M_{p,q_2}^\alpha(\TmRn)  \quad (q_1\leq q_2),
\end{eqnarray}
\begin{eqnarray}
	\label{e1.13}
 M_{p_1,q}^\alpha(\TmRn)  \hookrightarrow M_{p_2,q}^\alpha(\TmRn)  \quad (p_1\leq p_2).
\end{eqnarray}
Rubio de Francia's inequality and duality yield 
\begin{eqnarray}
	\label{e1.14}
M_{p,p'} (\TmRn)  \hookrightarrow  L^p (\TmRn) \hookrightarrow M_{p, p} (\TmRn) \quad (2\leq p\leq \infty ),
\end{eqnarray}
\begin{eqnarray}
	\label{e1.15}
M_{p,p} (\TmRn)  \hookrightarrow  L^p(\TmRn)  \hookrightarrow M_{p, p'} (\TmRn) \quad (1\leq p\leq 2 ). 
\end{eqnarray}
By H\"older's inequality,
\begin{eqnarray}
	\label{e1.16}
M_{p,q_1}^{\alpha_1}(\TmRn)\hookrightarrow M_{p,q_2}^{\alpha_2}(\TmRn)
\end{eqnarray}
whenever $ \alpha_1-\alpha_2> (m+n)(1/q_2-1/q_1)>0$.

Our result is the following.
 
\begin{theorem}\label{th1.2}
	Let $p\geq 2$ and $1\leq q<\infty$. Then  there is a constant $C=C(m,n,p,q,\alpha)$ such that
	\begin{align}\label{LSM}
		\|e^{it\Delta_{\TmRn}}f\|_{L^p(\TmRn\times [0,1])}\leq C \|f\|_{M_{p,q}^\alpha(\TmRn)}
	\end{align}
provided that  $\alpha>\alpha(p,q)$, where 		
	\begin{itemize}
		\item[(a)] $\alpha(p,q)=\max \{ 0, (m+n)(1/2-1/q)\}$  with  $2\leq p\leq 2+4/(m+n)$.

		\item[(b)]  $\alpha(p,q)=(m+n)(1-1/p-1/q)-2/p$ with $p\geq 2+4/(m+n)$ and  $q\geq 2$.
		
		\item[(c)] $\alpha(p,q)=2(1-1/q)[(m+n)(1/2-1/p)-2/p]$ with  $p\geq 2+4/(m+n)$ and $1\leq q\leq 2$.
	\end{itemize}
\end{theorem}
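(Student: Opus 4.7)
The plan is to reduce to the endpoint case (b) with $q=2$ (using Bourgain-Demeter $\ell^2$-decoupling), and then to extend by soft embedding and interpolation arguments. The key preliminary is a single-cube estimate, uniform in $K=(k,k')\in\mathbb{Z}^m\times\mathbb{Z}^n$: writing $\square_K f(x,y) = e^{i(k\cdot x+k'\cdot y)}h_K(y)$ with $h_K$ Fourier-supported in a unit cube at the origin, a direct computation gives
\[
e^{it\Delta_\TmRn}\square_K f(x,y) = e^{i(k\cdot x+k'\cdot y - t(|k|^2+|k'|^2))}(e^{it\Delta_y}h_K)(y-2tk'),
\]
and hence by $L^p$ translation invariance together with Miyachi's estimate \eqref{Mi} at unit frequency,
\[
\|e^{it\Delta_\TmRn}\square_K f\|_{L^p(\TmRn\times[0,1])} \lesssim \|\square_K f\|_{L^p(\TmRn)}.
\]

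For the endpoint case (b) with $q=2$ and $p\geq p_0:=2(m+n+2)/(m+n)$, I decompose $f=\sum_{j\geq 0} P_j f$ dyadically in spatial frequency, so that $|K|\sim 2^j$ on the Fourier support of $P_j f$. For $p\geq 2$, the Littlewood-Paley theorem on $\TmRn$ gives $\|e^{it\Delta_\TmRn}f\|_{L^p}\lesssim(\sum_j\|e^{it\Delta_\TmRn}P_j f\|_{L^p}^2)^{1/2}$. The Bourgain-Demeter $\ell^2$-decoupling inequality \cite{BoDe} applied at the critical exponent $p_0$ (with $2^{j\epsilon}$ loss), and interpolated with the trivial $L^\infty$-bound (via Bernstein and Cauchy-Schwarz over the $\lesssim 2^{j(m+n)}$ unit caps), yields for $p\geq p_0$
\[
\|e^{it\Delta_\TmRn}P_j f\|_{L^p(\TmRn\times[0,1])} \lesssim_\epsilon 2^{j(s(p)+\epsilon)}\Big(\sum_{|K|\sim 2^j}\|e^{it\Delta_\TmRn}\square_K f\|_{L^p}^2\Big)^{1/2},
\]
where $s(p)=(m+n)(1/2-1/p)-2/p$. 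Combining with the single-cube estimate, pulling $\langle K\rangle^{2\alpha}\sim 2^{2j\alpha}$ out of the inner sum, and summing in $j$ yields $\|e^{it\Delta_\TmRn}f\|_{L^p}\lesssim\|f\|_{M_{p,2}^\alpha}$ whenever $\alpha>s(p)$.

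The remaining cases follow by soft arguments. Case (b) for $q>2$ is obtained from the $q=2$ result via the embedding \eqref{e1.16}, $M_{p,q}^\alpha\hookrightarrow M_{p,2}^{\alpha-(m+n)(1/2-1/q)}$. Case (c) at $q=1$ is immediate from Minkowski's inequality and the single-cube estimate; case (c) for $1<q<2$ then follows by complex interpolation between $q=1$ ($\alpha>0$) and $q=2$ ($\alpha>s(p)$), giving the stated exponent $2(1-1/q)s(p)$. Case (a) at $q=2$ follows by complex interpolation between $p=2$ (where Plancherel yields the estimate for all $\alpha>0$) and $p=p_0$ (where the previous paragraph yields the estimate for all $\alpha>0$ since $s(p_0)=0$); the $q\neq 2$ sub-cases of (a) then reduce to $q=2$ via \eqref{e1.12} or \eqref{e1.16}.

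The main obstacle is the decoupling step: the Bourgain-Demeter inequality is stated for the paraboloid in $\mathbb{R}^{m+n+1}$, and transferring it to the semiperiodic paraboloid over $\mathbb{Z}^m\times\mathbb{R}^n$ requires viewing $\mathbb{T}^m$-Fourier modes as a discrete restriction of $\mathbb{R}^m$-frequencies together with a standard extension/periodization argument, in the spirit of Barron's Strichartz estimates \cite{B}. Sharp tracking of the exponent $s(p)$ through the $L^\infty$-interpolation is also essential for attaining the endpoint regularity.
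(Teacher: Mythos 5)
Your proposal is correct and follows essentially the same route as the paper: reduce to $q=2$, $p\geq 2+4/(m+n)$ via the Bourgain--Demeter $\ell^2$-decoupling transferred to the semiperiodic paraboloid (the paper's Proposition~\ref{Propq2}), combine with a Littlewood--Paley square-function reduction and the single-cube fixed-time estimate, and then obtain the remaining cases through the embeddings \eqref{e1.12}, \eqref{e1.16} and interpolation with the trivial $q=1$ triangle-inequality bound. The only cosmetic differences are that you derive the subcritical range of case (a) by interpolating in $p$ between Plancherel at $p=2$ and the endpoint $p=2+4/(m+n)$ (whereas the paper applies the decoupling machinery directly on $2\le p\le 2+4/(m+n)$ with $\alpha>0$), and that you re-derive the supercritical decoupling exponent $s(p)$ by interpolating the critical estimate against $L^\infty$, whereas Proposition~\ref{BDv2} already states the estimate for all $p\ge 2(m+n+2)/(m+n)$.
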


The sharpness of the range 
 $\alpha$ (up to endpoints) in Theorems \ref{th1.1} and \ref{th1.2} is a consequence
of the following   proposition. It is seen   that
Theorem~\ref{th1.2} is sharp up to the endpoint regularity for 
$2\leq  p\leq 2+4/(m+n)$ and $1\leq q\leq 2$,  and for $ 2+4/(m+n)\leq p\leq \infty$ and $2\leq q\leq \infty$.

\begin{proposition}\label{prop1.3}
	 (i)	Let $p, q, r\ge 1$.
		Suppose
		\begin{align}\label{eq:LS}
			\big\| e^{it  \Delta_{\mathbb T^m\times\mathbb R^n}} f \big\|_{L^q(\mathbb{T}^m\times \R^n, L^r[0,1])}
			\leq C  \| f\|_{W^{\alpha, p}(\mathbb{T}^m\times \R^n)}
		\end{align}
		holds for some $\alpha\ge 0$. Then
		$$
		\alpha
		\ge
		(m+n)\Big(\frac12-\frac1q\Big)
		+n\Big(\frac 12-\frac1p\Big)
		-\frac2r.  
		$$
		In particular, when $p=q=r$, one has $ 
		\alpha\geq (m+2n)\big(1/2- 1/p\big)-2/p;
		$ 
		when $p=2$, $q=r$, one has $\alpha\geq(m+n)\big(1/2-1/q\big)- 2/q$.
 
	(ii) Let $p, q\ge 1$. Suppose
	\begin{align}\label{eq:modulationn}
		\big\| e^{it  \Delta_{\mathbb T^m\times\mathbb R^n}} f \big\|_{L^p(\mathbb{T}^m\times \R^n\times [0,1])}
		\leq C  \|f\|_{M^\alpha_{p,q}(\mathbb{T}^m\times \R^n)}
	\end{align}
	holds for some $\alpha$. Then 
	$$
	\alpha\geq \max\Big\{0,(m+n)\big(1-\frac1p-\frac1q\big)-\frac2p\Big\}.
	$$
\end{proposition}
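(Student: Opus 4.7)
The plan is to prove both parts by testing the hypothesized inequality against explicit product test functions $f(x,y)=g(x)h(y)$, frequency localized at a single scale $N$ on $\TmRn$, and reading off the constraint on $\alpha$ from the scaling in $N$. The guiding idea is to arrange $g$ and $h$ so that their Schr\"odinger evolutions simultaneously refocus at a common time $t_*\in(0,1)$, producing a large local spacetime norm on the left-hand side, while each of $g,h$ is dispersed in its initial state so that the right-hand side norm is as small as possible.

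For part (i), I would fix a generic $t_*\in(0,1)$ and a Schwartz bump $\phi$ on $\Rn$ and take
\[
 g(x) = \sum_{k\in\Zm,\, |k|\le N} e^{i t_*|k|^2 + i k\cdot x}, \qquad h(y) = \int_{\Rn} \phi(\xi/N)\,e^{i t_*|\xi|^2 + i\xi\cdot y}\,d\xi.
\]
At $t=t_*$ the chirp cancels the Schr\"odinger phase: $e^{it_*\Delta_x}g$ becomes the Dirichlet kernel on $\Tm$ (amplitude $\sim N^m$ on $|x|\lesssim N^{-1}$) and $e^{it_*\Delta_y}h$ becomes the focused bump $N^n\check\phi(N\cdot)$ (amplitude $\sim N^n$ on $|y|\lesssim N^{-1}$). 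Both concentrations persist for $|t-t_*|\lesssim N^{-2}$, giving
\[
 \|e^{it\Delta}f\|_{L^q(\TmRn,\,L^r[0,1])}\gtrsim N^{(m+n)(1-1/q)-2/r}.
\]
On the other side, a stationary-phase / Poisson-summation analysis yields $\|g\|_{L^p(\Tm)}\sim N^{m/2}$ (consistent with the Plancherel identity $\|g\|_2=N^{m/2}$), while dispersion for $h=e^{-it_*\Delta_y}[N^n\check\phi(N\cdot)]$ over the order-one time $t_*$ spreads it to amplitude $\sim 1$ on a ball of radius $\sim N$, giving $\|h\|_{L^p(\Rn)}\sim N^{n/p}$. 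Since $f$ is Fourier supported in a ball of radius $\sim N$, this yields $\|f\|_{W^{\alpha,p}}\sim N^{\alpha+m/2+n/p}$; comparing orders of $N$ in the two sides produces the claimed bound.

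For part (ii) I would combine two test functions and take the maximum. First, a unit wave packet $f$ with $\hat f$ supported on a single unit Fourier cube at $K_0$ with $|K_0|\sim N$ satisfies $\|f\|_{M^\alpha_{p,q}}\sim N^\alpha$ and $\|e^{it\Delta}f\|_{L^p(\TmRn\times[0,1])}\sim 1$, forcing $\alpha\ge 0$. Second, reusing $f=gh$ from part~(i) with $q=r=p$, a direct computation of $\square_K f$ shows that each of the $\sim N^{m+n}$ nonzero pieces (indexed by $K=(k,k')$ with $|k|,|k'|\lesssim N$) is essentially a unit wave packet with $\|\square_K f\|_p\sim 1$, so $\|f\|_{M^\alpha_{p,q}}\sim N^{\alpha+(m+n)/q}$. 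Combined with the focusing lower bound $\|e^{it\Delta}f\|_{L^p(\TmRn\times[0,1])}\gtrsim N^{(m+n)(1-1/p)-2/p}$ from above, this forces $\alpha\ge(m+n)(1-1/p-1/q)-2/p$.

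The main technical obstacle, common to both parts, will be rigorously justifying the two-sided bound $\|g\|_{L^p(\Tm)}\sim N^{m/2}$. The continuous stationary-phase heuristic is only suggestive because $k$ ranges over the integer lattice, so I would complete the argument either by choosing $t_*$ irrational of suitable Diophantine type so that Weyl equidistribution of the phases $\{t_*|k|^2\}$ forces $|g(x)|\sim N^{m/2}$ uniformly in $x$, or via Poisson summation, rewriting $g$ as a lattice sum of translated stationary-phase Gaussians and extracting the $N^{m/2}$ bound through square-root cancellation.
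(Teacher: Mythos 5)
Your overall plan---test the inequality on a frequency-$N$ product wave packet that is dispersed at $t=0$ and refocuses at a later time inside $[0,1]$, then read off the $N$-power constraint on $\alpha$---is the same as the paper's, and your exponent bookkeeping is correct in both parts. The genuine difference, and the genuine gap, is in how you control the torus component in its dispersed state.

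You defocus the torus modes over an $O(1)$ time $t_*$, so your $g(x)=\sum_{|k|\le N}e^{it_*|k|^2+ik\cdot x}$ is a chirped Dirichlet sum, and you need the upper bound $\|g\|_{L^p(\mathbb T^m)}\lesssim N^{m/2+\varepsilon}$. You correctly flag this as the crux, but neither of your suggested remedies is complete. The Weyl/Diophantine route requires a uniform-in-$x$ quadratic exponential sum bound $\big|\sum_{|k|\le N}e^{it_*|k|^2+ik\cdot x}\big|\lesssim N^{m/2+\varepsilon}$ for a carefully chosen badly approximable $t_*$; this is a nontrivial piece of analytic number theory (a Weyl--Gauss sum estimate), it only gives an $\varepsilon$-loss (which is recoverable here, but must be argued), and in dimension $m\ge2$ you should work with a box rather than a ball to retain a product structure. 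The Poisson summation route does not obviously yield square-root cancellation either: at time $t_*\sim 1$ the dual-lattice translates of the frequency-$N$ chirped packet have width $\sim t_*N\sim N$, so they overlap on $\sim N$ lattice points, and their interference is again governed by the arithmetic of $t_*$ rather than by disjointness. In short, the bound is morally true for generic $t_*$ but a proof is missing. The paper avoids this entirely by refocusing at the \emph{semiclassical} time $\varepsilon_0 h$ ($h\sim N^{-1}$) instead of an $O(1)$ time: for the smoothly weighted packet $\widetilde\Phi_h$, the Burq--G\'erard--Tzvetkov semiclassical dispersion estimate gives $\|e^{i\varepsilon_0 h\Delta_{\mathbb T^m}}\widetilde\Phi_h\|_{L^\infty(\mathbb T^m)}\le C$ directly, before the number-theoretic structure of the flow on $\mathbb T^m$ can intrude, and $2\pi$-periodicity of $e^{it\Delta_{\mathbb T^m}}$ transports the dispersed state to a negative time so the forward evolution refocuses. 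This trades your Weyl estimate for a ready-made PDE lemma and sidesteps the Diophantine choice of $t_*$.

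For part (ii), your argument is correct, but it is more complicated than necessary: since $\|\cdot\|_{M^\alpha_{p,q}}$ is computed piece-by-piece over unit Fourier cubes, each $\square_K$ piece of the \emph{unevolved} focused packet $\widetilde\Phi_h(x)\Phi_h(y)$ already has $\|\square_K f\|_{L^p}\sim h^{(m+n)/2}$, so one can simply test near $t=0$ without first dispersing and then refocusing, which is what the paper does and yields the same exponent. Your derivation of the separate constraint $\alpha\ge 0$ from a single unit-cube packet matches the paper's.
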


The paper is organized as follows. In Section \ref{sec-pre}, we give some preliminaries on the Littlewood-Paley theory on $\TmRn$.
In Section  \ref{sec-1.1}, we prove Theorem \ref{th1.1} by making use of 
Strichartz-type  estimates of Barron \cite{B} for solutions to the Schr\"odinger equation \eqref{e1.111} on $\TmRn$.
 In Section \ref{sec-4},
we apply the $\ell^2$ decoupling theorem of Bourgain and Demeter \cite{BoDe} to  show Theorem \ref{th1.2}.
In Section \ref{sec-nec}, we
show the sharpness of Theorems \ref{th1.1} and \ref{th1.2} by proving a necessary condition for more general space-time estimates.

\medskip

\noindent{\bf Notation.}
We will write $A\leq CB$ when $C$ is a constant depending only on $m,n$, $p,q,r, \alpha$ and $s$; 
we write $A\leq C_\varepsilon B$ (resp. $A\leq C_N B$) to indicate that $C$ may depend additionally 
on $\varepsilon$ (resp. $N$). The values of $C$, $C_\varepsilon$ and $C_N$ may change from line to line.


\section{Preliminaries}\label{sec-pre}
\setcounter{equation}{0}

Throughout the article, we identify $\Tm$ with the periodic cube $[0,2\pi]^m\subset\R^m$. 
For functions $f(x,y)$ defined on $\TmRn$, we use the following convention for the Fourier transform:
$$\hat f(k,\xi)
=\frac{1}{(2\pi)^{m+n}}\int_{\TmRn} f(x,y) e^{-i(k\cdot  x+\xi\cdot y)} dxdy
,\quad k\in\Zm,\,\xi\in\Rn.$$
Consequently, for sufficiently good functions $f$, we have
$$f(x,y)=\sum_{k\in\Zm}\int_{\Rn} \hat f(k,\xi) e^{i(k\cdot x+\xi \cdot y)}\,d\xi.$$
A function $m(k,\xi)$ on $\Zm\times\Rn$ is said to be a multiplier on $L^p(\TmRn)$ if
$$Tf
=\sum_{k\in\Zm}\int_{\Rn} m(k,\xi)\hat f(k,\xi) e^{i(k\cdot x+\xi\cdot y)}\,d\xi$$
defines a bounded operator on $L^p(\TmRn)$ (the operator norm of $T$ will be denoted by $\|m\|_{\mathcal{M}^p(\TmRn)}$).
When $m(k,\xi)=m(-|k|^2-|\xi|^2)$ for some one-dimensional function $m(\cdot)$, we denote corresponding operator $T$ by $m(\Delta_{\mathbb T^m\times\mathbb R^n})$. In particular,
\begin{equation}\label{fourier-inv}
e^{it\Delta_{\mathbb T^m\times\mathbb R^n}}f(x,y)=\sum_{k\in\Zm}\int_{\Rn} \hat f(k,\xi) e^{i(k\cdot x+\xi\cdot  y)}e^{-it(|k|^2+|\xi|^2)}\,d\xi.
\end{equation}
For $\alpha\ge 0$, the Sobolev norm is defined by
$$\|f\|_{W^{\alpha,p}(\TmRn)}=\big\|(1-\Delta_{\mathbb T^m\times\mathbb R^n})^{\frac\alpha2}f\big\|_{L^p(\TmRn)}.$$

Below we outline some basic facts related to the Littlewood-Paley inequality for Sobolev functions. 
The first is a simple analogue of the classical transference theorem on $\Tm$ (see for example, \cite[Section 4.3.2]{Gr} and \cite{Sh}).

\begin{proposition}\label{AP1v2}
	Let $1\le p\le \infty$. Suppose $m(\eta,\xi)$ is a continuous multiplier on $L^p(\R^{m+n})$. 
	Then $m|_{\Zm\times\Rn}(k,\xi)$ is a multiplier on  $L^p(\mathbb{T}^m\times \R^n)$, with
	$\|m\|_{\mathcal{M}^p(\TmRn)}\le \|m\|_{\mathcal{M}^p(\R^{m+n})}$.
\end{proposition}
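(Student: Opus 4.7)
My strategy is a de Leeuw-style transference in the torus variables only: I lift a function $f$ on $\TmRn$ to a Schwartz function on $\R^{m+n}$ by multiplying by a slowly spreading bump, apply the Euclidean multiplier hypothesis, and recover the discrete bound by rescaling.

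By density I may assume $f(x,y)=\sum_{|k|\le N}a_k(y)e^{ik\cdot x}$ is a trigonometric polynomial in $x$ with Schwartz coefficients $a_k\in\mathcal{S}(\Rn)$. Fix $\phi\in\mathcal{S}(\R^m)$ with $\phi(0)=1$ and $\widehat{\phi}$ supported in $\{|\zeta|\le 1/4\}$, and for $\epsilon\in(0,1)$ set $g_\epsilon(x,y):=\phi(\epsilon x)f(x,y)\in\mathcal{S}(\R^{m+n})$. Since the translates $\widehat{\phi}((\eta-k)/\epsilon)$ have disjoint supports for distinct $k\in\Zm$, applying the Euclidean multiplier $T$ with symbol $m$ via Fourier inversion and substituting $\eta=k+\epsilon\zeta$ yield
$$Tg_\epsilon(x,y)=c\sum_{|k|\le N}e^{ik\cdot x}\int_{\Rn}\widehat{a_k}(\xi)e^{i\xi\cdot y}\!\int m(k+\epsilon\zeta,\xi)\widehat{\phi}(\zeta)e^{i\epsilon\zeta\cdot x}\,d\zeta\,d\xi.$$
Continuity of $m$ gives $m(k+\epsilon\zeta,\xi)\to m(k,\xi)$ uniformly on the compact support of $\widehat{\phi}$, so the inner $\zeta$-integral is approximated by $m(k,\xi)\int\widehat{\phi}(\zeta)e^{i\epsilon\zeta\cdot x}\,d\zeta=c'\,m(k,\xi)\phi(\epsilon x)$, producing the decomposition
$$Tg_\epsilon(x,y)=\phi(\epsilon x)\widetilde{T}f(x,y)+r_\epsilon(x,y),$$
where $\widetilde{T}$ denotes the formal multiplier with symbol $m|_{\Zm\times\Rn}$ on $\TmRn$ (applied to the trigonometric polynomial $f$).

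The rescaling is handled by Riemann-sum unfolding. Using $2\pi\Zm$-periodicity of $|f|^p$ in $x$,
$$\epsilon^m\|g_\epsilon\|_{L^p(\R^{m+n})}^p=\int_{\TmRn}|f(x,y)|^p\bigg(\epsilon^m\sum_{j\in\Zm}|\phi(\epsilon(x+2\pi j))|^p\bigg)dx\,dy\xrightarrow{\epsilon\to 0}\frac{\|\phi\|_{L^p(\R^m)}^p}{(2\pi)^m}\|f\|_{L^p(\TmRn)}^p,$$
the inner bracket being a Riemann sum that converges uniformly in $x$ for $\phi\in\mathcal{S}(\R^m)$; the same identity applied to the periodic-in-$x$ function $\widetilde{T}f$ gives the analogous limit for $\epsilon^m\|\phi(\epsilon\cdot)\widetilde{T}f\|_{L^p(\R^{m+n})}^p$. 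Combining these limits with Minkowski's inequality and the hypothesis $\|Tg_\epsilon\|_{L^p(\R^{m+n})}\le\|m\|_{\mathcal{M}^p(\R^{m+n})}\|g_\epsilon\|_{L^p(\R^{m+n})}$, multiplying by $\epsilon^{m/p}$, and sending $\epsilon\to 0$ deliver the desired bound $\|\widetilde{T}f\|_{L^p(\TmRn)}\le\|m\|_{\mathcal{M}^p(\R^{m+n})}\|f\|_{L^p(\TmRn)}$.

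The main technical obstacle is $L^p$-control of the remainder at the right scale: I need $\epsilon^{m/p}\|r_\epsilon\|_{L^p(\R^{m+n})}\to 0$. Pointwise smallness of $r_\epsilon$ alone is not enough, since $Tg_\epsilon$ and $\phi(\epsilon\cdot)\widetilde{T}f$ both have $L^p$-mass spread over a region of $x$-volume $\epsilon^{-m}$. My plan is to dominate $r_\epsilon$ pointwise by $\omega(\epsilon)|\phi(\epsilon x)|h(y)$ with $\omega(\epsilon)\to 0$, using the uniform continuity of $m$ on compact sets combined with the Schwartz decay of the finitely many $\widehat{a_k}$ to produce an $L^p$-integrable envelope $h$; this gives $\epsilon^{m/p}\|r_\epsilon\|_{L^p(\R^{m+n})}\lesssim\omega(\epsilon)\|\phi\|_{L^p(\R^m)}\|h\|_{L^p(\Rn)}\to 0$, completing the argument.
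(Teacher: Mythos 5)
Your primal de Leeuw transference is genuinely different from the paper's proof, which runs through a bilinear pairing $\int_{\TmRn}S(P\varphi)\,Q\psi\,dxdy=\lim_{\epsilon\to 0}\epsilon^{m/2}\int_{\R^{m+n}}T(PL_{\epsilon\alpha}\varphi)\,QL_{\epsilon\beta}\psi\,dxdy$ with Gaussian weights $L_\epsilon(x)=e^{-\epsilon\pi|x|^2}$ split with $\alpha=1/p$, $\beta=1/p'$, and closes by H\"older; the point of the dual route is precisely that it never needs $L^p$-control of a remainder. In your argument, the lifting $g_\epsilon=\phi(\epsilon\cdot)f$, the disjoint-support observation, and the Riemann-sum unfolding are all correct. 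The gap is the remainder estimate. You claim $|r_\epsilon(x,y)|\le\omega(\epsilon)|\phi(\epsilon x)|h(y)$, but the envelope $|\phi(\epsilon x)|$ cannot be produced from continuity of $m$. Writing $r_\epsilon(x,y)=\sum_{|k|\le N}e^{ik\cdot x}R_k^\epsilon(\epsilon x,y)$ with
$$R_k^\epsilon(u,y)=\int\!\!\int\widehat{a_k}(\xi)\bigl[m(k+\epsilon\zeta,\xi)-m(k,\xi)\bigr]\widehat{\phi}(\zeta)e^{i\zeta\cdot u}e^{i\xi\cdot y}\,d\zeta\,d\xi,$$
the integrand is only bounded-continuous (not differentiable) in $\zeta$, so $R_k^\epsilon(u,y)\to 0$ as $|u|\to\infty$ by Riemann--Lebesgue with \emph{no quantitative rate}, let alone Schwartz decay like $|\phi(u)|$. (The same issue occurs in $y$, since $m(\cdot,\xi)$ is not assumed smooth in $\xi$.) Uniform continuity of $m$ on compacts gives $L^\infty$-smallness of $R_k^\epsilon$, not spatial localization; so $\epsilon^{m/p}\|r_\epsilon\|_{L^p(\R^{m+n})}\to 0$ is not justified for $p<\infty$. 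A secondary issue is that your Riemann-sum limit implicitly assumes $\widetilde T f\in L^p(\TmRn)$, which is what is to be proved; this one is sidesteppable by Fatou, but the remainder is the real obstacle.

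The strategy can be repaired, but only with an extra layer: first establish the bound for, say, smooth compactly supported symbols, where the Lipschitz estimate $|m(k+\epsilon\zeta,\xi)-m(k,\xi)|\lesssim\epsilon$ together with smoothness of the integrand in $(\zeta,\xi)$ does give an envelope of the form $\epsilon\,\rho(\epsilon x)h(y)$ with $\rho,h$ Schwartz; then pass to general continuous $m$ by mollification, using $\|m*\rho_\delta\|_{\mathcal{M}^p(\R^{m+n})}\le\|m\|_{\mathcal{M}^p(\R^{m+n})}$ (translation invariance of the multiplier norm plus Minkowski's integral inequality), pointwise convergence $(m*\rho_\delta)(k,\xi)\to m(k,\xi)$, and Fatou's lemma. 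As written, without this mollification step, the proof has a genuine gap; the paper's bilinear route avoids it entirely since nothing there need converge in $L^p$.
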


The Littlewood-Paley inequality on $\TmRn$ follows from Proposition \ref{AP1v2} and the classical 
Littlewood-Paley inequality on $\R^{m+n}$.  Let $\psi$ be a Schwartz function on $\R^{m+n}$ with
$$
\text{supp}\widehat{\psi}\subset\{(\eta,\xi)\in\R^m\times\Rn:  1/2\leq \sqrt{|\eta|^2+|\xi|^2} \leq 2\}.
$$
Let $\widehat{\psi}_j(\eta,\xi)= \widehat{\psi}(\eta/2^j,\xi/2^j)$, $j\ge 1$, and $\psi_0$ be a Schwartz function on $\R^{m+n}$ with its Fourier transform supported in the unit ball.
For functions $f$ on $\mathbb{T}^m\times \R^n$, set $\widehat{P_jf}(k,\xi)= \widehat{\psi}_j(k,\xi)\hat{f}(k,\xi)$.

\begin{proposition}[Littlewood-Paley inequality]\label{AC2v2}
	Let $1<p<\infty$. Then for any $\alpha\ge 0$, we have
$$
	\Big\| \Big(\sum_{j\ge 0} |2^{j\alpha}P_jf|^2 \Big)^{1/2}\Big\|_{L^p(\mathbb{T}^m\times \R^n)}
	\leq C \|f\|_{W^{\alpha,p}(\mathbb{T}^m\times \R^n)}.
$$
Moreover, assuming $\sum_{j\geq 0}\psi_j=1$, we have the reverse inequality
$$
	\|f\|_{W^{\alpha,p}(\mathbb{T}^m\times \R^n)}\leq C\Big\| \Big(\sum_{j\ge 0} |2^{j\alpha}P_jf|^2 \Big)^{1/2}\Big\|_{L^p(\mathbb{T}^m\times \R^n)}.
$$
\end{proposition}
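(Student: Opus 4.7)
The plan is to deduce both inequalities from their Euclidean analogues on $\mathbb{R}^{m+n}$ via a vector-valued version of the transference result in Proposition~\ref{AP1v2}. I view the map $f\mapsto (2^{j\alpha}P_jf)_{j\ge 0}$ as the action of an $\ell^2$-valued Fourier multiplier on $h=(1-\Delta)^{\alpha/2}f$, namely the one with symbol
$$
M^\alpha(\eta,\xi) := \Bigl(2^{j\alpha}(1+|\eta|^2+|\xi|^2)^{-\alpha/2}\widehat{\psi}_j(\eta,\xi)\Bigr)_{j\ge 0}.
$$
The classical Littlewood-Paley theorem on $\mathbb{R}^{m+n}$, together with the fact that $(1-\Delta_{\mathbb{R}^{m+n}})^{\alpha/2}$ is an isomorphism between $W^{\alpha,p}(\mathbb{R}^{m+n})$ and $L^p(\mathbb{R}^{m+n})$, says exactly that $M^\alpha$ defines a bounded Fourier multiplier operator $L^p(\mathbb{R}^{m+n})\to L^p(\mathbb{R}^{m+n};\ell^2)$ for $1<p<\infty$.

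Next, I upgrade Proposition~\ref{AP1v2} to $\ell^2$-valued (or Hilbert-valued) multipliers. The Gaussian regularization argument in the scalar proof of that proposition is entirely pointwise in the target: one applies the scalar identity componentwise with uniform Gaussian parameters $L_\varepsilon(x)=e^{-\varepsilon\pi|x|^2}$ and sums in $\ell^2$ before letting $\varepsilon\to 0$. The conclusion is that the restriction of $M^\alpha$ to $\mathbb{Z}^m\times\mathbb{R}^n$ remains a bounded multiplier $L^p(\mathbb{T}^m\times\mathbb{R}^n)\to L^p(\mathbb{T}^m\times\mathbb{R}^n;\ell^2)$, with norm no larger. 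Setting $h=(1-\Delta_{\mathbb{T}^m\times\mathbb{R}^n})^{\alpha/2}f$, the $j$-th component of the output is exactly $2^{j\alpha}P_jf$, which delivers the direct inequality.

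For the reverse inequality, the classical reverse Littlewood-Paley inequality on $\mathbb{R}^{m+n}$ — obtained by duality from the direct one and valid for all $1<p<\infty$ and $\alpha\in\mathbb{R}$, under the assumption $\sum_j\psi_j=1$ — can be phrased as the boundedness of the $\ell^2$-valued Fourier multiplier
$$
(h_j)_{j\ge 0} \;\longmapsto\; \sum_{j\ge 0} 2^{j\alpha}\,(1-\Delta_{\mathbb{R}^{m+n}})^{-\alpha/2}\widetilde{P}_j^{\mathbb{R}} h_j,
$$
with $\widetilde{\psi}_j$ chosen so that $\widetilde{\psi}_j\widehat{\psi}_j=\widehat{\psi}_j$, as an operator $L^p(\mathbb{R}^{m+n};\ell^2)\to L^p(\mathbb{R}^{m+n})$. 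Applying the vector-valued transference in the same way (now with $\ell^2$-valued input and scalar output) gives the reverse inequality on $\mathbb{T}^m\times\mathbb{R}^n$ once I observe that under $\sum_j\psi_j=1$ the identity $(1-\Delta)^{\alpha/2}f = \sum_j 2^{j\alpha}(1-\Delta)^{-\alpha/2}\widetilde{P}_j\bigl(2^{j\alpha}P_jf\bigr)\big/2^{j\alpha}$ holds distributionally on $\mathbb{T}^m\times\mathbb{R}^n$.

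The main obstacle is the vector-valued refinement of Proposition~\ref{AP1v2}: I need to verify that the Gaussian-limit identity carries over to Hilbert-valued targets with constants independent of the sequence index, so that the $\ell^2$-norm can be taken inside the limit $\varepsilon\to 0$. Once this is in place, both directions of the Littlewood-Paley equivalence on $\mathbb{T}^m\times\mathbb{R}^n$ are immediate consequences of their well-known Euclidean counterparts transferred through the $\ell^2$-valued version of Proposition~\ref{AP1v2}.
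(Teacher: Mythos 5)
Your overall strategy --- transfer the classical Euclidean Littlewood--Paley inequality on $\mathbb{R}^{m+n}$ to $\mathbb{T}^m\times\mathbb{R}^n$ via Proposition~\ref{AP1v2} --- is exactly the route the paper indicates (it states only that the result ``follows from Proposition~\ref{AP1v2} and the classical Littlewood--Paley inequality on $\mathbb{R}^{m+n}$''). Two points need attention. First, you correctly identify that the crux is a Hilbert-valued version of Proposition~\ref{AP1v2}, but you leave it unverified; such a vector-valued transference theorem is true, but the paper supplies only the scalar statement. You can avoid proving the vector-valued version altogether by Khintchine's inequality: the square-function bound is equivalent, up to constants, to a bound, uniform in the signs $\epsilon_j=\pm1$, for the scalar multipliers $m_\epsilon(\zeta)=\sum_{j}\epsilon_j\,2^{j\alpha}(1+|\zeta|^2)^{-\alpha/2}\widehat{\psi}_j(\zeta)$; each $m_\epsilon$ is a continuous Mikhlin multiplier on $\mathbb{R}^{m+n}$ with norm uniform in $\epsilon$, so the scalar Proposition~\ref{AP1v2} applies directly, and Khintchine converts the conclusion back into the square-function estimate on $\mathbb{T}^m\times\mathbb{R}^n$.

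Second, the reconstruction identity you invoke for the reverse inequality is false as written: the operator $\sum_j 2^{j\alpha}(1-\Delta)^{-\alpha/2}\widetilde P_j P_j$ has symbol $\sum_j 2^{j\alpha}(1+|\zeta|^2)^{-\alpha/2}\widehat{\psi}_j(\zeta)$, which is comparable to, but not equal to, $(1+|\zeta|^2)^{\alpha/2}$. The correct decomposition is $(1-\Delta)^{\alpha/2}f=\sum_j 2^{-j\alpha}(1-\Delta)^{\alpha/2}\widetilde P_j\bigl(2^{j\alpha}P_jf\bigr)$, i.e.\ the signs of the exponents on both $2^{j\alpha}$ and $(1-\Delta)$ must be flipped; the family of symbols $2^{-j\alpha}(1+|\zeta|^2)^{\alpha/2}\widetilde{\psi}_j(\zeta)$ is uniformly Mikhlin on the supports, and the rest of your duality argument then goes through. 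With these two repairs the proof is complete and coincides in substance with the paper's intended argument.
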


The following corollaries of Proposition \ref{AC2v2} will be used in the proof of Theorems \ref{th1.1} and \ref{th1.2}.

\begin{corollary}\label{cor}
(i) Let $1<p<\infty$ and let $\alpha\ge 0$. Then for any function $f$ on $\TmRn$ with $\text{supp}\hat f\subset B_R^{m+n}(0)$, $R\ge 1$, we have
$$\|f\|_{W^{\alpha,p}(\mathbb{T}^m\times \R^n)}
\le C R^\alpha \|f\|_{L^{p}(\mathbb{T}^m\times \R^n)}.$$

\noindent(ii) Let $2\le p<\infty$ and let $X$ be a Banach space. Suppose a linear operator $T$ satisfies
$$\|Tf\|_X
\le C R^\alpha \|f\|_{L^{p}(\mathbb{T}^m\times \R^n)}$$
for all functions $f$ on $\TmRn$ with $\text{supp}\hat f\subset \big(\mathbb Z^m\cap[-R,R]^m\big)\times\mathbb [-R,R]^n$, $R\ge 1$. Then for any $\varepsilon>0$, we have
$$\|Tf\|_X
\le C_\varepsilon  \|f\|_{W^{\alpha+\varepsilon,p}(\mathbb{T}^m\times \R^n)}.$$
\end{corollary}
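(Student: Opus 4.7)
The plan for both parts is to reduce everything to the Littlewood-Paley square function characterization of the Sobolev norm provided by Proposition~\ref{AC2v2}.

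For part (i), I would expand $f = \sum_{j\geq 0} P_j f$ and observe that, since $\supp \hat f \subset B_R^{m+n}(0)$ and each $\widehat{P_j f}$ is supported in a dyadic annulus of scale $2^j$, only the finitely many $j$ with $2^j \lesssim R$ contribute to the sum. The reverse inequality in Proposition~\ref{AC2v2} then yields
\[
\|f\|_{W^{\alpha,p}(\TmRn)} \leq C \Bigl\|\Bigl(\sum_{2^j \lesssim R} |2^{j\alpha} P_j f|^2\Bigr)^{1/2}\Bigr\|_{L^p(\TmRn)} \leq C R^\alpha \Bigl\|\Bigl(\sum_{j\geq 0} |P_j f|^2\Bigr)^{1/2}\Bigr\|_{L^p(\TmRn)},
\]
after which the $\alpha=0$ case of the same proposition identifies the last factor with $\|f\|_{L^p(\TmRn)}$ up to constants.

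For part (ii), I would again write $f = \sum_{j\geq 0} P_j f$. Each $P_j f$ has Fourier support inside $(\Zm \cap [-C 2^j, C 2^j]^m) \times [-C 2^j, C 2^j]^n$, so applying the hypothesis at scale $R \sim 2^j$ and the triangle inequality in $X$ gives
\[
\|Tf\|_X \leq \sum_{j\geq 0} \|T P_j f\|_X \leq C \sum_{j\geq 0} 2^{j\alpha} \|P_j f\|_{L^p(\TmRn)}.
\]
To convert this $\ell^1$ sum into a Sobolev norm, I would insert a convergence factor $2^{-j\varepsilon}$ and apply H\"older's inequality with exponents $p'$ and $p$ in $j$, producing
\[
\sum_{j\geq 0} 2^{j\alpha} \|P_j f\|_{L^p} \leq C_\varepsilon \Bigl(\sum_{j\geq 0} 2^{jp(\alpha+\varepsilon)} \|P_j f\|_{L^p}^p\Bigr)^{1/p}.
\]
Swapping the discrete sum with the spatial integral and invoking the pointwise embedding $\ell^2 \hookrightarrow \ell^p$ (valid for $p \geq 2$) bounds the right-hand side by $\bigl\|(\sum_j |2^{j(\alpha+\varepsilon)} P_j f|^2)^{1/2}\bigr\|_{L^p}$, which Proposition~\ref{AC2v2} controls by $\|f\|_{W^{\alpha+\varepsilon, p}(\TmRn)}$.

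The only delicate point is the arbitrarily small regularity loss $\varepsilon$ in part (ii). It is essentially forced by the summability requirement across the infinitely many dyadic scales produced by the triangle inequality, and the $\ell^2 \hookrightarrow \ell^p$ step then explains the restriction $p \geq 2$; removing the $\varepsilon$ or relaxing to $p < 2$ would require a substantially stronger hypothesis on $T$ (for instance, a vector-valued or square-function variant) that is not assumed here.
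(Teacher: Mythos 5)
Your proof is correct and follows exactly the route the paper intends: both parts reduce to the Littlewood--Paley characterization of Proposition~\ref{AC2v2}, with the annular support of $P_j f$ matching the Fourier-support hypotheses, the reverse LP inequality plus $2^{j\alpha}\lesssim R^\alpha$ giving (i), and the triangle inequality followed by H\"older in $j$ (absorbing a $2^{-j\varepsilon}$ factor) and the pointwise $\ell^2\hookrightarrow\ell^p$ embedding giving (ii). The paper states the result without proof, and your argument supplies precisely the expected one, including a correct explanation of where the $\varepsilon$-loss and the restriction $p\ge2$ enter.
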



\medskip


\medskip

\section{Proof of Theorem~\ref{th1.1}}\label{sec-1.1}
\setcounter{equation}{0}
 
   To prove Theorem \ref{th1.1}, as a starting point one  needs the following Proposition~\ref{prop3.1} on sharp Strichartz-type  estimates for solutions to the Schr\"odinger equation \eqref{e1.111} on $\TmRn$, which was proved by Barron    \cite[Proposition  3.4]{B}.

 \begin{proposition}\label{prop3.1}
 	Let $p\geq 2+ 4/({m+n})$ and let $\alpha> (m+n)(1/2-1/p)- 2/p$. Then we have
 	\begin{eqnarray}\label{e1.3}
 		\big\|e^{it\Delta_\TmRn}f\big\|_{L^p(\mathbb{T}^m\times \R^n\times [0,1])}\leq C(m,n,p,\alpha) \|f\|_{W^{\alpha,2}(\mathbb{T}^m\times\R^n)}.
 	\end{eqnarray}
 \end{proposition}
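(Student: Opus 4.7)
The plan is to use Littlewood-Paley to reduce the problem to dyadic frequency-localized estimates, to establish the critical-exponent case via the Bourgain-Demeter $\ell^2$-decoupling theorem for the paraboloid, and then to interpolate with a trivial $L^\infty$ bound to cover the full range. By Corollary~\ref{cor}(ii), it suffices to prove, for each dyadic $N \geq 1$ and every $f$ with Fourier support in the annulus $\{(k,\xi)\in\Z^m\times\R^n:|k|^2+|\xi|^2\sim N^2\}$, the dyadic estimate
\begin{equation*}
\|e^{it\Delta_\TmRn}f\|_{L^p(\mathbb{T}^m\times\R^n\times[0,1])}\lesssim_\varepsilon N^{(m+n)(\frac12-\frac1p)-\frac2p+\varepsilon}\|f\|_{L^2(\mathbb{T}^m\times\R^n)}.
\end{equation*}

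At the critical exponent $p_c=2+\tfrac{4}{m+n}$ the target loss is just $N^\varepsilon$, and I would obtain it from the Bourgain-Demeter $\ell^2$-decoupling theorem for the $(m+n)$-dimensional paraboloid in $\R^{m+n+1}$. The space-time function $u(x,y,t)=e^{it\Delta_\TmRn}f$ has frequency support on the truncated paraboloid $\{\tau=|k|^2+|\xi|^2,\,|k|^2+|\xi|^2\sim N^2\}$. I would partition this set into unit-sized caps $\theta$ in each of the $m+n$ frequency directions, so each cap isolates one lattice point $k_\theta\in\Z^m$ together with a unit cube $\theta_\xi\subset\R^n$. After the parabolic rescaling $(x,y,t)\mapsto(Nx,Ny,N^2t)$, which sends these caps to the natural decoupling scale $N^{-1}=\delta^{1/2}$ with $\delta=N^{-2}$ on the unit-size paraboloid, a transference argument (analogous to Proposition~\ref{AP1v2}) allows us to invoke Bourgain-Demeter and obtain
\begin{equation*}
\|u\|_{L^{p_c}(\mathbb{T}^m\times\R^n\times[0,1])}\lesssim_\varepsilon N^\varepsilon \bigg(\sum_\theta\|u_\theta\|_{L^{p_c}(\mathbb{T}^m\times\R^n\times[0,1])}^2\bigg)^{1/2}.
\end{equation*}
Each restricted piece factors explicitly as $u_\theta(x,y,t)=e^{i(k_\theta\cdot x-t|k_\theta|^2)}(e^{it\Delta_{\R^n}}h_\theta)(y)$ with $\widehat{h_\theta}$ supported in $\theta_\xi$, and Bernstein on $\R^n$ for unit-frequency data yields $\|u_\theta\|_{L^{p_c}}\lesssim\|h_\theta\|_{L^2}\sim\|f_\theta\|_{L^2}$. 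Summing via Plancherel $\sum_\theta\|f_\theta\|_{L^2}^2=\|f\|_{L^2}^2$ closes the critical case.

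For $p>p_c$, I would interpolate the critical estimate against the trivial Cauchy-Schwarz bound
\begin{equation*}
\|u\|_{L^\infty(\mathbb{T}^m\times\R^n\times[0,1])}\leq\sum_{|k|\lesssim N}\int_{|\xi|\lesssim N}|\hat f(k,\xi)|\,d\xi\lesssim N^{(m+n)/2}\|f\|_{L^2},
\end{equation*}
which exploits that the $(k,\xi)$-support has $(m+n)$-volume $\sim N^{m+n}$. Log-convexity of $L^p$-norms with $1/p=(1-\eta)/p_c$ produces a loss $N^{(m+n)\eta/2+\varepsilon(1-\eta)}$; the computation $\eta=1-p_c/p$ turns $(m+n)\eta/2$ into exactly $(m+n)(\tfrac12-\tfrac1p)-\tfrac2p$, delivering the desired dyadic bound. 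The main technical obstacle lies in the critical case: the Bourgain-Demeter inequality is stated on $\R^{m+n+1}$, so one needs a careful transference/periodization argument to deploy it on $\mathbb{T}^m\times\R^n\times[0,1]$, and the rescaling must be chosen so that the decoupling scale $N^{-1}$ matches the rescaled lattice spacing, ensuring that each cap captures exactly one $k$-mode.
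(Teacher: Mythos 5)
The paper does not prove Proposition~\ref{prop3.1} at all: it is quoted from Barron \cite[Proposition~3.4]{B}, with only a two-sentence description of his method. Your proposal is therefore a genuinely self-contained alternative, and as an outline it is correct. It is in essence the standard Bourgain--Demeter scheme for deriving Strichartz estimates from decoupling, transplanted to $\TmRn$: your cap geometry is right (rescaling by $N$ puts the lattice spacing and the cap size both at $N^{-1}=\delta$, matching the $\delta^2=N^{-2}$ neighborhood in Proposition~\ref{BDv2}), the factorization $u_\theta=e^{i(k_\theta\cdot x-t|k_\theta|^2)}e^{it\Delta_{\Rn}}h_\theta$ plus Bernstein on the unit-frequency $\Rn$-factor correctly bounds each piece by $\|f_\theta\|_{L^2}$, and the arithmetic $(m+n)\eta/2=(m+n)(1/2-1/p)-2/p$ for $\eta=1-p_c/p$ checks out. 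Two remarks. First, the interpolation with $L^\infty$ is redundant: applying Proposition~\ref{BDv2} directly at exponent $p\ge p_c$ gives the loss $\delta^{-((m+n)/2-(m+n+2)/p+\varepsilon)}=N^{(m+n)(1/2-1/p)-2/p+\varepsilon}$, which is exactly the target, and Bernstein still controls each cap at exponent $p$; your detour costs nothing but is unnecessary. Second, the step you call ``transference'' is really periodization: one must use the $2\pi N$-periodicity in $x$ of the rescaled extension operator to inflate the spatial domain to a cube of side $N^2$, multiply by a Schwartz cutoff in $(x,t)$ to place the Fourier support in $\mathcal{N}_{N^{-2}}(\mathbb{P}^{m+n})$, tile $\R^n_y$ by $N^2$-cubes, and undo the cutoff on each $u_\theta$ using rapid decay; this is exactly the computation carried out in the paper's proof of Proposition~\ref{Propq2} (equations \eqref{eq4.2}--\eqref{eq4.4}), so it is available and not a gap. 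The real content of Barron's longer argument — measuring the decoupled pieces at subcritical mixed-norm exponents — is needed for his lossless ($\varepsilon$-free, $p>p_c$) and global-in-time statements, not for the local, $\varepsilon$-lossy estimate \eqref{e1.3}, so your simpler endgame suffices here.
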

  
  In  the breakthrough paper \cite{BoDe}, Bourgain and Demeter proved sharp Strichartz estimates on $\mathbb T^n$ 
  by establishing a sharp $\ell^2$ decoupling theorem in $\mathbb R^{n+1}$. Their approach makes use of periodicity 
  and parabolic rescaling to pass between the periodic and Euclidean settings. In order to
  obtain  Proposition \ref{prop3.1}, 
  Barron used the same ideas to reduce the problem to the setting of $\mathbb R^{m+n+1}$. The needed $\ell^2$ decoupling
  is now carried out with respect to the $(m+n)$-dimensional paraboloid in $\mathbb R^{m+n+1}$, at a scale adapted to the
  periodic component $\mathbb T^m$. The decoupled pieces are initially measured at a subcritical integrability exponent
  for both $\mathbb T^m$ and $\mathbb R^n$. 
  However, these pieces are essentially constant in the periodic direction 
  and band-limited in the nonperiodic direction. This observation, together with a
  Strichartz inequality with mixed norm, allows   to bound each piece efficiently to get the desired estimate.

 A standard scaling argument shows that Proposition~\ref{prop3.1} is sharp in that \eqref{e1.3} fails when
  $0\le\alpha< (m+n)\left(\frac12-\frac1p\right)-\frac2p$; see also Proposition \ref{prop1.3}.
  When $m=n=1$, the endpoint version of \eqref{e1.3} takes the form
  \begin{equation}\label{TT}
  	\big\|e^{it\Delta_\TmRn}f\big\|_{L^4(\mathbb{T}\times \R\times [0,1])} \leq C \|f\|_{L^{2}(\mathbb{T}\times\R)},
  \end{equation}
  and has been established earlier by Takaoka and Tzvetkov \cite{TaTz}.  Furthermore,    Barron  \cite{B} show that the $\epsilon$-loss
  from Strichartz estimates \eqref{e1.3}  can be remove for $p>2+ 4/(m+n)$. That is  for $p> 2+ 4/({m+n})$, 
$$ 
\big\|e^{it\Delta_\TmRn}f\big\|_{L^p(\mathbb{T}^m\times \R^n\times [0,1])}\leq C(m,n,p,\alpha) \|f\|_{W^{\alpha,2}(\mathbb{T}^m\times\R^n)}
$$ 
with    $\alpha=(m+n)(1/2-1/p)- 2/p$.
 There is a large body of literature related to the Strichartz estimates for Schr\"odinger equation, we refer the reader to \cite{B, BCP, CGYZ, GOW, IP, KV, TaTz} 
  and the references therein.

Next we deduce Theorem \ref{th1.1} from Proposition~\ref{prop3.1} using
a localization argument for the Schr\"odinger equation.
Such an argument is relatively standard in the Euclidean setting (cf. \cite{Ta}, \cite{Ro} and references therein). It allows one
to consider only initial data $f$ that are (essentially) localized at a scale proportional to its frequency, say $R$. 
In the cylindrical setting $\mathbb T^m\times\mathbb R^n$, we adapt a localization argument from \cite[Lemma 8]{Ro} to 
show that, in order to prove Theorem \ref{th1.1}, it suffices to consider functions $f$ that are essentially supported in 
balls of the form $\mathbb T^m\times B^n_{R^{1+\varepsilon}}$. This effectively allows us to apply H\"older's inequality to
 deduce the local smoothing estimate \eqref{e1.2} from the Strichartz estimate \eqref{e1.3}.

Let us first summarize the Euclidean estimates we need in the proof. These estimates are direct corollaries of  \cite[Lemma 7]{Ro} by rescaling.

\begin{lemma}[\cite{Ro}, Lemma 7]\label{localizaiton1}
Let $\varepsilon>0$ and let $R\ge 1$. There exist Schwartz functions $\eta_\ell\in\mathscr S(\mathbb R^n)$, 
$\ell\in\mathbb Z^n$, with $\text{supp}\widehat {\eta_\ell}\subset \mathbb [-R^{-1},R^{-1}]^n$,
such that the following statements hold. \\
(i) For any function $f$,
\begin{equation}\label{local-holder}
\bigg(\sum_{\ell\in\mathbb Z^n} \| f\eta_\ell\|_{L^q(\mathbb R^n)}^p
\bigg)^{1/p}
\le C_\varepsilon R^{
n\left(\frac1q-\frac1p\right)+\varepsilon}
\|f\|_{L^p(\mathbb R^n)},\quad 1\le q\le p\le\infty;
\end{equation}
(ii) For any function $f$ with $\text{supp}\hat f\subset \mathbb [-R,R]^n$, any $t\in[0,1]$, and any $N\ge 1$,
\begin{equation}\label{local-pointwise}
\big\|e^{it\Delta_{\mathbb R^n}}f\big\|_{L^p(\mathbb R^n)}
\le
\bigg(\sum_{\ell\in\mathbb Z^n} \big\| e^{it\Delta_{\mathbb R^n}}(f\eta_\ell)\big\|_{L^p(\mathbb R^n)}^p
\bigg)^{1/p}
+ C_N R^{-N}\|f\|_{L^p(\mathbb R^n)},\quad
1\le p\le \infty.
\end{equation}
\end{lemma}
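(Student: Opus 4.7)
The plan is to derive both parts of Lemma~\ref{localizaiton1} as consequences of Rogers' original Lemma 7 (at unit frequency scale) via a parabolic rescaling adapted to the parameter $R$. To construct the partition, I would fix a Schwartz function $\eta_0\in\mathscr{S}(\R^n)$ with $\text{supp}\,\widehat{\eta_0}\subset[-1,1]^n$ for which the integer translates $\{\eta_0(\cdot-\ell)\}_{\ell\in\Z^n}$ form an (exact or approximate) partition of unity on $\R^n$ -- standard via Poisson summation -- and then set $\eta_\ell(x):=\eta_0(x/R-\ell)$. A direct Fourier computation gives $\text{supp}\,\widehat{\eta_\ell}\subset[-R^{-1},R^{-1}]^n$, and each $\eta_\ell$ is essentially concentrated on the cube $R\ell+[-R,R]^n$ with rapidly decaying Schwartz tails.

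For part (i), I would combine the pointwise bound $|\eta_\ell(x)|\leq C_N(1+|x/R-\ell|)^{-N}$ with H\"older's inequality on cubes of sidelength $R$. Decomposing $\R^n$ into cubes $Q_j=Rj+[-R/2,R/2]^n$, $j\in\Z^n$, one obtains
\[
\|f\eta_\ell\|_{L^q(\R^n)}\leq C_N R^{n(1/q-1/p)}\sum_{j\in\Z^n}(1+|j-\ell|)^{-N}\|f\|_{L^p(Q_j)}.
\]
Taking the $\ell^p$-norm in $\ell$ and using Minkowski's inequality to interchange the $j$-sum with the $\ell^p$-norm then bounds the left side by $C\|f\|_{L^p(\R^n)}$; the $R^\varepsilon$-factor in the statement is comfortable slack for the Schwartz tails and is not strictly essential here.

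For part (ii), the central fact is the finite speed of propagation for frequency-localized wave packets: if $\text{supp}\,\widehat f\subset[-R,R]^n$, each packet travels at velocity $2\xi$ with $|\xi|\leq R$, hence displaces by at most $2R$ over $t\in[0,1]$. Thus $e^{it\Delta}(f\eta_\ell)$ is essentially concentrated in the enlarged cube $R\ell+B_{CR}$, with the contribution outside controlled by $C_NR^{-N}\|f\eta_\ell\|_{L^p}$ through a non-stationary-phase analysis of
\[
e^{it\Delta}(f\eta_\ell)(x)=\int_{\R^n}\widehat{f\eta_\ell}(\xi)\,e^{i(x\cdot\xi-t|\xi|^2)}\,d\xi.
\]
The enlarged cubes have bounded overlap uniformly in $R$, so
\[
\Bigl|\sum_\ell e^{it\Delta}(f\eta_\ell)(x)\Bigr| \leq C\Bigl(\sum_\ell|e^{it\Delta}(f\eta_\ell)(x)|^p\Bigr)^{1/p}
\]
holds pointwise modulo the rapidly-decaying tail, and integrating in $x$ produces the stated inequality.

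The hard part will be the quantitative tail estimate in part (ii): one must verify that on the complement of $R\ell+B_{CR}$ the phase gradient $|x-R\ell-2t\xi|$ is $\gtrsim R$ uniformly for $\xi$ in the Fourier support of $f\eta_\ell$, enabling repeated integration by parts to supply the $R^{-N}$ decay. This is precisely what Rogers carries out in his original Lemma~7, and our task reduces to checking that the parabolic rescaling (dilating $x$ by $R$ and $t$ by $R^2$) leaves all the relevant geometric and oscillatory estimates intact.
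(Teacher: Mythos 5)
The paper itself does not prove this lemma; it simply declares both parts ``direct corollaries of \cite[Lemma 7]{Ro} by rescaling,'' which is exactly the strategy you announce in your opening paragraph. The direct sketch you then supply is in the right spirit and captures the two mechanisms Rogers uses: a Schwartz almost-orthogonal partition at spatial scale $R$ with compactly supported Fourier transforms at scale $R^{-1}$ (constructed via Poisson summation, as you do), and finite speed of propagation for band-limited data (velocity $\lesssim R$ over time $\lesssim 1$) enforced by nonstationary phase.

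There are, however, two discrepancies with the lemma as stated, concentrated in your closing step for part (ii). First, the pointwise bounded-overlap estimate $|\sum_\ell e^{it\Delta}(f\eta_\ell)(x)| \le C(\sum_\ell |e^{it\Delta}(f\eta_\ell)(x)|^p)^{1/p}$, after $L^p$ integration, produces the main term with an overlap-dependent constant $C$, whereas \eqref{local-pointwise} asserts constant $1$. Rogers obtains constant $1$ by instead decomposing physical space into a genuine \emph{tiling} by cubes $\widetilde Q_\ell$, estimating $\|e^{it\Delta}f\|_{L^p(\widetilde Q_\ell)}$ termwise by $\|e^{it\Delta}(f\eta_\ell)\|_{L^p(\mathbb R^n)}$ plus a rapid-decay error, and then summing $p$-th powers over the disjoint cubes, so no overlap constant appears. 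Second, recording the tail of the $\ell$-th piece merely as $C_N R^{-N}\|f\eta_\ell\|_{L^p}$ does not suffice: $\sum_\ell \|f\eta_\ell\|_{L^p}$ can vastly exceed $\|f\|_{L^p}$ when $p>1$, so to recover the error $C_N R^{-N}\|f\|_{L^p}$ one needs the tail of the $\ell'$-th piece, measured on the $\ell$-th cube, to decay in $|\ell-\ell'|$ as well, which the kernel decay supplies but your sketch does not make explicit. Neither issue affects how the paper uses the lemma in Lemma \ref{localizaiton2} (an extra multiplicative constant on the main term would be harmless there), but as written your sketch does not quite reproduce the statement.
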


The next lemma is a cylindrical analogue of \cite[Lemma 8]{Ro} for $\mathbb R^n$. The proof makes use of 
the product structure of $\mathbb T^m\times\mathbb R^n$ so that Lemma \ref{localizaiton1} can be applied 
directly to the Euclidean component. We remark that the same proof works for manifolds the form $M\times\mathbb R^n$ 
(where $M$ is any compact Riemannian manifold). For our application we will only be concerned with the case $M=\mathbb T^m$ here.

\begin{lemma}\label{localizaiton2}
Let $p>2$ and let $s\ge 0$. Suppose
\begin{equation}\label{strichartz-assumed}
\big\|e^{it\Delta_{\mathbb T^m\times\mathbb R^n}} f\big\|_{L^p(\mathbb T^m\times\mathbb R^n\times [0,1])}
\le C R^s\|f\|_{L^2(\mathbb T^m\times\mathbb R^n)}
\end{equation}
holds for functions $f$ with $\text{supp}\hat f\subset \big(\mathbb Z^m\cap[-R,R]^m\big)\times\mathbb [-R,R]^n$, $R\ge 1$. 
Then, for the same class of functions  $f$, we have
\begin{equation}\label{strichartz-LS}
\big\|e^{it\Delta_{\mathbb T^m\times\mathbb R^n}} f\big\|_{L^p(\mathbb T^m\times\mathbb R^n\times [0,1])}
\le C_\varepsilon R^{s+n\left(\frac12-\frac1p\right)+\varepsilon}
\|f\|_{L^p(\mathbb T^m\times\mathbb R^n)},
\quad \varepsilon>0.
\end{equation}
\end{lemma}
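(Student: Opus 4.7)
The approach is to adapt Rogers' Euclidean localization argument \cite[Lemma 8]{Ro} to the product setting by applying the cutoffs $\eta_\ell$ of Lemma~\ref{localizaiton1} only in the non-compact direction $y$, and then exploiting the embedding $L^p(\mathbb T^m) \hookrightarrow L^2(\mathbb T^m)$ (which holds since $\mathbb T^m$ has finite volume and $p \ge 2$) to convert the $L^2$-based hypothesis \eqref{strichartz-assumed} into the desired $L^p$-based conclusion. The key observation that makes this feasible is the semigroup factorization $e^{it\Delta_{\mathbb T^m \times \mathbb R^n}} = e^{it\Delta_{\mathbb R^n}} \circ e^{it\Delta_{\mathbb T^m}}$ together with the fact that $\eta_\ell$ depends only on $y$, so it commutes with $e^{it\Delta_{\mathbb T^m}}$.

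For the localization step, I fix $x \in \mathbb T^m$ and $t \in [0,1]$, set $h_{x,t}(y) := (e^{it\Delta_{\mathbb T^m}} f)(x, y)$, and note that $\operatorname{supp}\widehat{h_{x,t}} \subset [-R, R]^n$. Applying the pointwise estimate \eqref{local-pointwise} to $h_{x,t}$ and using that $e^{it\Delta_{\mathbb R^n}}(h_{x,t}\,\eta_\ell)(y) = e^{it\Delta_{\mathbb T^m \times \mathbb R^n}}(f \eta_\ell)(x, y)$ yields
\[
\bigl\|e^{it\Delta} f(x, \cdot)\bigr\|_{L^p_y} \le \Bigl(\sum_\ell \bigl\|e^{it\Delta}(f\eta_\ell)(x, \cdot)\bigr\|_{L^p_y}^p\Bigr)^{1/p} + C_N R^{-N} \|h_{x,t}\|_{L^p_y}.
\]
Raising to the $p$-th power and integrating over $\mathbb T^m \times [0,1]$ (applying the elementary $(a+b)^p \lesssim a^p + b^p$) reduces matters to controlling $\sum_\ell \|e^{it\Delta}(f\eta_\ell)\|_{L^p}^p$, modulo a remainder of the form $R^{-N}$ times a polynomial power of $R$. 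The remainder is handled crudely using \eqref{temp} and a Bernstein estimate to bound $\|e^{it\Delta_{\mathbb T^m}} f\|_{L^p}$ by $R^{O(1)}\|f\|_{L^p}$, then absorbing by taking $N$ large enough.

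Each $f \eta_\ell$ has Fourier support in $(\mathbb Z^m \cap [-R,R]^m) \times [-2R, 2R]^n$ (since $\widehat{\eta_\ell}$ lives in $[-R^{-1}, R^{-1}]^n$), so \eqref{strichartz-assumed} applied at scale $2R$ gives $\|e^{it\Delta}(f\eta_\ell)\|_{L^p} \le C R^s \|f\eta_\ell\|_{L^2_{x,y}}$. To sum and convert the right-hand side back to $\|f\|_{L^p}$, I introduce $G_\ell(x) := \|f(x, \cdot)\eta_\ell\|_{L^2_y}$, invoke the finite-measure embedding $\|G_\ell\|_{L^2(\mathbb T^m)} \lesssim \|G_\ell\|_{L^p(\mathbb T^m)}$ (valid for $p \ge 2$), and then apply \eqref{local-holder} pointwise in $x$ with $q = 2$:
\[
\Bigl(\sum_\ell \|G_\ell\|_{L^2_x}^p\Bigr)^{1/p} \lesssim \Bigl(\sum_\ell \|G_\ell\|_{L^p_x}^p\Bigr)^{1/p} = \Bigl\|\Bigl(\sum_\ell G_\ell(x)^p\Bigr)^{1/p}\Bigr\|_{L^p_x} \lesssim_\varepsilon R^{n(1/2 - 1/p) + \varepsilon}\|f\|_{L^p_{x,y}}.
\]
Combining the pieces gives the claimed loss of $R^{s + n(1/2 - 1/p) + \varepsilon}$.

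The main subtlety specific to the cylindrical setting is the passage from the $L^2_x$ norm (forced on us by \eqref{strichartz-assumed}) to the $L^p_x$ norm (required by \eqref{strichartz-LS}), which has no counterpart in Rogers' purely Euclidean argument and relies crucially on the compactness of $\mathbb T^m$. Everything else — the pointwise localization performed only in the $y$ variable, the Fourier-support bookkeeping after multiplying by $\eta_\ell$, and the absorption of the rapidly decaying error term — transfers directly from the Euclidean case.
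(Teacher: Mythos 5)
Your proof is correct and follows essentially the same route as the paper: apply the Euclidean localization (Lemma~\ref{localizaiton1}) fiberwise in $y$ after the semigroup factorization, invoke the hypothesis on each $f\eta_\ell$ (at scale comparable to $R$), pass from $L^2_x$ to $L^p_x$ via the finite-volume embedding on $\mathbb T^m$, and finish with \eqref{local-holder} at $q=2$ while absorbing the rapidly decaying remainder. The only cosmetic differences are that you use $(a+b)^p\lesssim a^p+b^p$ where the paper uses Minkowski's inequality, and you appeal to Bernstein together with \eqref{temp} to bound the error term where the paper uses a crude $L^\infty$ bound on $\mathbb T^m$; both choices are harmless since any polynomial-in-$R$ loss there is absorbed by taking $N$ large.
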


\begin{proof}
Fix $x\in\mathbb T^m$ and $t\in[0,1]$. Recall from \eqref{commute} that
$$
(e^{it\Delta_{\mathbb T^m\times\mathbb R^n}} f)(x,y)
=e^{it\Delta_{\mathbb R^n}}
\big((e^{it\Delta_{\mathbb T^m}} f)(x,\cdot)\big)(y),\quad (x,y)\in \mathbb T^m\times\mathbb R^n.
$$
Applying \eqref{local-pointwise} to $(e^{it\Delta_{\mathbb T^m}} f)(x,\cdot)$, we obtain for any $N\ge 1$,
\begin{align*}
\big\|(e^{it\Delta_{\mathbb T^m\times\mathbb R^n}} f)(x,\cdot)\big\|_{L^p(\mathbb R^n)}
&\le \bigg(\sum_{\ell\in\mathbb Z^n} \big\| (e^{it\Delta_{\mathbb T^m\times\mathbb R^n}} (f\eta_\ell))(x,\cdot)\big\|_{L^p(\mathbb R^n)}^p
\bigg)^{1/p}\\
&\quad+ C_N R^{-N}\big\|(e^{it\Delta_{\mathbb T^m}} f)(x,\cdot)\big\|_{L^p(\mathbb R^n)},
\end{align*}
where we have used the identity $(e^{it\Delta_{\mathbb T^m}} f)\eta_\ell(y)=e^{it\Delta_{\mathbb T^m}} (f\eta_\ell(y))$ in the first term.
Integrating over $x$ and $t$, we then have
\begin{align*}
\big\|e^{it\Delta_{\mathbb T^m\times\mathbb R^n}} f\big\|_{L^p(\mathbb T^m\times\mathbb R^n\times[0,1])}
&\le \bigg(\sum_{\ell\in\mathbb Z^n} \big\| e^{it\Delta_{\mathbb T^m\times\mathbb R^n}} (f\eta_\ell)\big\|_{L^p(\mathbb T^m\times\mathbb R^n\times[0,1])}^p
\bigg)^{1/p}\\
&\quad+ C_N R^{-N}\big\|e^{it\Delta_{\mathbb T^m}} f\big\|_{L^p(\mathbb T^m\times\mathbb R^n\times[0,1])}\\
&=: I + II.
\end{align*}
Applying the assumption \eqref{strichartz-assumed} to the first term \textit{I}, we see that
$$
I \le
C R^s \bigg(\sum_{\ell\in\mathbb Z^n}
\| f\eta_\ell\|_{L^2(\mathbb T^m\times\mathbb R^n)}^p
\bigg)^{1/p}.
$$
By H\"older's inequality,
$$\| f\eta_\ell\|_{L^2(\mathbb T^m\times\mathbb R^n)}^p
\le C^p\int_{\mathbb T^m}\| f(x,\cdot)\eta_\ell\|_{L^2(\mathbb R^n)}^p\,dx.$$
Thus we can bound
$$
I \le
C R^s \Bigg(
\int_{\mathbb T^m}
\sum_{\ell\in\mathbb Z^n} \| f(x,\cdot)\eta_\ell\|_{L^2(\mathbb R^n)}^p\,dx
\Bigg)^{1/p}.
$$
Applying \eqref{local-holder} to the sum with $q=2$, we get
\begin{align}
I
&\le
C_\varepsilon R^{s+
n\left(\frac1q-\frac1p\right)+\varepsilon}
\left(
\int_{\mathbb T^m}
\| f(x,\cdot)\|_{L^p(\mathbb R^n)}^p\,dx
\right)^{1/p}\notag\\
&= C_\varepsilon R^{s+
n\left(\frac1q-\frac1p\right)+\varepsilon}
\|f\|_{L^p(\mathbb T^m\times\mathbb R^n)}.\label{term-I}
\end{align}

For the second term \textit{II}, notice that for any fixed $y$ and $t$, by simple $L^\infty$ estimate on $\mathbb T^m$,
$$\big\|e^{it\Delta_{\mathbb T^m}} f(\cdot,y)\big\|_{L^p(\mathbb T^m)}
\le C R^{\frac m2}\|f(\cdot,y)\|_{L^p(\mathbb T^m)}.
$$
Therefore, after integrating over $y$ and $t$, we have
\begin{equation}\label{term-II}
II\le C_N R^{-N+\frac m2} \|f\|_{L^p(\mathbb T^m\times\mathbb R^n)}.
\end{equation}
Now choosing $N\ge \frac m2$ in \eqref{term-II}, and combining \eqref{term-I} and \eqref{term-II},
 we obtain the desired estimate \eqref{strichartz-LS}. This completes the proof of Proposition \ref{localizaiton1}.
\end{proof}

Theorem \ref{th1.1} now follows from Proposition~\ref{prop3.1}, Lemma \ref{localizaiton2}, and the Littlewood-Paley inequality.

\begin{proof}[Proof of Theorem \ref{th1.1}]
By Corollary \ref{cor}(ii), it suffices to show that for  $p\ge\frac{2(m+n+2)}{m+n}$ and any $\varepsilon>0$,
$$
\big\|e^{it\Delta_{\mathbb T^m\times\mathbb R^n}} f\big\|_{L^p(\mathbb T^m\times\mathbb R^n\times [0,1])}
\le C_\varepsilon R^{(m+2n)\left(\frac12-\frac1p\right)-\frac2p+2\varepsilon}
\|f\|_{L^p(\mathbb T^m\times\mathbb R^n)}
$$
holds for functions $f$ with $\text{supp}\hat f\subset \big(\mathbb Z^m\cap[-R,R]^m\big) \times\mathbb [-R,R]^n$, $R\ge 1$. 
However, this follows immediately by combining Proposition~\ref{prop3.1} and Lemma \ref{localizaiton2} with $s=\frac{m+n}{2}-\frac{m+n+2}{p}+\varepsilon$.   This proves \eqref{e1.2} .
The sharpness of the range $\alpha$ in \eqref{e1.2} is a special case of Proposition \ref{prop1.3}, which will 
proved in Section 5 below.
 The proof of Theorem~\ref{th1.1} is complete.
\end{proof}


 \medskip

\section{Proof of Theorem~\ref{th1.2}}\label{sec-4}

 To begin the proof of Theorem~\ref{th1.2}, let us first recall the setup of the $\ell^2$ decoupling theorem for the paraboloid     due to Bourgain and Demeter\cite{BoDe}, which plays an important role in our proof. 
 Let $\mathbb{P}^{n}=\big\{(\xi,|\xi|^2)\in \R^{n+1}:\,\xi\in [-1,1]^{n}\big\}$ be the truncated paraboloid in $\R^{n+1}$, and set
 $$
 \mathcal{N}_{\delta^2}(\mathbb{P}^{n})=
 \big\{(\xi,|\xi|^2+\mu)\in \mathbb{R}^{n+1}:\,\xi\in [-1,1]^{n},\,|\mu|\leq {\delta^2}/{2}\big\}.
 $$
 Let $\mathcal{P}_{\delta}^{n}$ be the cover of $\mathcal{N}_{\delta^2}(\mathbb{P}^{n})$ with curved regions
 $$
 \theta= \left\{(\xi,|\xi|^2+\mu)\in \mathbb R^{n+1}:\,\xi\in C_\theta,\,|\mu|\leq \delta^2\right\},
 $$
 where $C_{\theta}$ runs over all cubes $c+[-\frac{\delta}{2}, \frac{\delta}{2}]^{n}$ with 
 $c\in \frac{\delta}{2}\,\mathbb{Z}^{n}\cap [-1,1]^{n}$. For functions $f$ defined on $\mathbb R^{n+1}$, 
 denote by $f_\theta$ the Fourier restriction of $f$ to $\theta$.
 
The following (special) version of the $\ell^2$ decoupling theorem of Bourgain-Demeter \cite[Theorem 1.1]{BoDe} will be needed in the proof of Theorem \ref{th1.2}; see also \cite[Chapter 10]{De}.
 
 \begin{proposition} \label{BDv2}
 	Suppose $\text{supp} \hat{f}\subset \mathcal{N}_{\delta^2}(\mathbb{P}^{n})$. Then for $p\ge\frac{2(n+2)}{n}$ and any $\varepsilon>0$,
 	\begin{eqnarray}\label{eq3.5}
 		\|f\|_{L^p(\mathbb R^{n+1})}
 		\leq C_\varepsilon \delta^{\,-(\frac{n}{2}-\frac{n+2}{p}+\varepsilon)} \Big( \sum_{\theta\in \mathcal{P}^{n}_{\delta}} 
 		\|f_\theta\|_{L^p(\mathbb R^{n+1})}^2 \Big)^{1/2}.
 	\end{eqnarray}
 \end{proposition}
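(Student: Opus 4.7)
The target is the sharp $\ell^2$ decoupling inequality for the paraboloid in the range $p\geq 2(n+2)/n$, which is the celebrated theorem of Bourgain and Demeter. Since the authors state this as a black-box input, I will sketch the Bourgain--Demeter proof strategy. Define the decoupling constant $D_p(\delta)$ as the smallest $C$ for which \eqref{eq3.5} holds, and record two structural ingredients: parabolic rescaling, by which any cap of size $\sigma$ on $\mathbb{P}^n$ becomes, after an affine change of variables, the full truncated paraboloid at rescaled thickness $(\delta/\sigma)^2$; and the trivial Plancherel bound $D_2(\delta)\lesssim 1$. Since the target exponent $s(p)=n/2-(n+2)/p$ is continuous in $p$ and the inequality becomes weaker as $p$ increases past the critical exponent, interpolation with the $L^2$ bound reduces matters to the endpoint $p_c=2(n+2)/n$, where $s(p_c)=0$, and the goal becomes $D_{p_c}(\delta)\lesssim_\varepsilon \delta^{-\varepsilon}$.

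Next I would introduce the $(n+1)$-linear version, restricted to caps $\theta_1,\dots,\theta_{n+1}$ whose unit normals are $\nu$-transversal. The engine here is the multilinear Kakeya inequality of Bennett--Carbery--Tao (sharpened by Guth): after a standard wave packet decomposition of each $f_{\theta_j}$ into tubes of dimensions $\delta^{-1}\times\cdots\times\delta^{-1}\times\delta^{-2}$, tubes coming from transversal caps have geometrically controlled overlap, which allows one to bound the $(n+1)$-linear product in $L^{p_c}$ by the corresponding product of $\ell^2$ square functions. Combining this with a C\'ordoba--Fefferman type $L^2$-orthogonality on balls of radius $\delta^{-2}$ (using the uncertainty principle at the neighbourhood scale) yields the multilinear decoupling inequality at the critical exponent, with at most a $\delta^{-\varepsilon}$ loss.

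Third, to pass from the multilinear to the linear inequality I would invoke the Bourgain--Guth broad/narrow analysis. At each point $x$, either (broad case) $|f(x)|$ is comparable, up to a constant, to a transverse $(n+1)$-linear contribution, to which the multilinear estimate from the previous step applies directly; or (narrow case) the mass at $x$ is concentrated in a neighbourhood of a lower-dimensional affine subspace, in which case lower-dimensional decoupling on that subvariety plus parabolic rescaling to a smaller number of caps gives an inductive bound. Iterating this dichotomy at geometrically increasing intermediate scales and closing the induction on $\delta^{-1}$ produces $D_{p_c}(\delta)\lesssim_\varepsilon \delta^{-\varepsilon}$; a final interpolation with $L^2$ yields the full range $p\geq p_c$.

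The main obstacle is Step three: each application of the broad/narrow dichotomy introduces a logarithmic or sub-polynomial loss, and the subtle point is to verify that iterating these losses across all relevant scales still leaves only an $\varepsilon$ loss in $\delta$. Getting the bookkeeping of the parabolic rescalings right, and checking that the multilinear estimate is preserved under rescaling with uniform constants, is the technically delicate heart of the Bourgain--Demeter argument; eliminating the $\delta^{-\varepsilon}$ loss altogether remains a famous open problem.
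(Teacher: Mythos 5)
The paper does not prove Proposition~\ref{BDv2}; it cites it directly as \cite[Theorem~1.1]{BoDe} (see also \cite[Chapter~10]{De}), so there is no internal proof to compare against, and treating it as a black box is exactly what the paper intends. Your sketch of the Bourgain--Demeter argument is broadly faithful in its main components: multilinear decoupling via the Bennett--Carbery--Tao/Guth multilinear Kakeya inequality, wave packets at scale $\delta^{-1}\times\cdots\times\delta^{-1}\times\delta^{-2}$, passage from multilinear to linear via a Bourgain--Guth broad/narrow dichotomy combined with parabolic rescaling and induction on scales, and your closing caveat about bookkeeping the sub-polynomial losses is apt. One substantive imprecision: interpolating the decoupling constant with the trivial $L^2$ orthogonality bound controls the range $2\le p\le p_c$, not $p\ge p_c$; to extend from the critical exponent $p_c=2(n+2)/n$ (where the target exponent is $0$) upward to all $p\ge p_c$ one interpolates instead with the trivial Cauchy--Schwarz bound $D_\infty(\delta)\lesssim\delta^{-n/2}$ coming from the $\approx\delta^{-n}$ caps. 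Also, the modern form of the Bourgain--Demeter proof hinges on the ``ball inflation'' lemma and a carefully designed multiscale bootstrap rather than a naive broad/narrow iteration; your outline gestures at this but elides the mechanism that actually prevents the losses from accumulating. None of this affects the paper, which uses the result only as stated.
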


In order to apply Proposition~\ref{BDv2} to prove Theorem \ref{th1.2}, it will be convenient to 
introduce some notations from Fourier extension theory. 
From the   structure of $\Zm\times\Rn$ in the frequency, we consider the slices of $\mathbb{P}^{m+n}$, where the slices are $R^{-1}-$separated:
$$
\mathbb{P}^{m+n}_{R^{-1}}= \big\{(\tau,\xi,|\tau|^2+|\xi|^2)\in\mathbb{P}^{m+n}:\, \tau\in \big(R^{-1}\mathbb{Z}^m\big)\cap [-1,1]^m,\,\xi\in [-1,1]^n\big\}.
$$
For functions $g$ defined on $\mathbb{P}^{m+n}_{R^{-1}}$, define
\begin{align*}
	E_{\mathbb{P}^{m+n}_{R^{-1}}}g(x,y,t)
	= \sum_{\tau\in (R^{-1}\mathbb{Z}^m)\cap [-1,1]^m}\int_{[-1,1]^n} g(\tau,\xi)e^{i(\tau,\xi,|\tau|^2+|\xi|^2)\cdot(x,y,t)}\,d\xi,	
	\quad (x,y,t)\in\mathbb R^{m+n+1},
\end{align*}
  where we have identified (as we will often do below) $g(\tau,\xi,|\tau|^2+|\xi|^2)$ with $g(\tau,\xi)$.

In the following proposition, we first consider the $q=2$ case in Theorem \ref{th1.2}.  Recall that   $\{\sigma_K\}_{K\in\Z^{m+n}}\subseteq C_c^{\infty}({\mathbb R^{m+n}})$ is a smooth partition of unity, adapted to the translated unit cubes $Q_K=K+[-{1\over 2}, {1\over 2})^{m+n}$,
and  $\square_K$ is the operator given by $\widehat{\square_K f}= \sigma_K \hat f$,   $\langle K \rangle= (1+|K|^2)^{1/2}$. The  norm is defined by
$$
\|f\|_{M_{p,q}^\alpha(\TmRn)}= \bigg( \sum_{K\in\Z^{m+n}} \langle K \rangle^{q\alpha}\|\square_K f\|_{L^p(\TmRn)}^q \bigg)^{1/q}.
$$

\begin{proposition}\label{Propq2}
	Let $p\geq 2+4/(m+n)$. Then for any $\alpha>(m+n)(1/2-1/p)-2/p$, there exist a constant $C=C(m,n,p,\alpha)$ such that
\begin{align}\label{LF}
	\big\|e^{it\Delta_\TmRn}f\big\|_{L^p(\mathbb{T}^m\times \mathbb R^n\times [0,1])}\leq C R^\alpha \bigg( \sum_{K\in\Z^{m+n}}\| \square_K f\|_{L^p(\TmRn)}^2 \bigg)^{1/2}
\end{align}
for $f$ with $\text{supp\,}\hat{f}\subset \big(\Z^m\cap [-R,R]^m\big)\times [-R,R]^n$.
\end{proposition}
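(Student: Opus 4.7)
The plan is to reduce the inequality to the $\ell^2$ decoupling inequality of Bourgain--Demeter stated in Proposition~\ref{BDv2}, following the rescaling strategy that Barron \cite{B} employed to prove Proposition~\ref{prop3.1}. First, I would exploit the periodicity of $e^{it\Delta_\TmRn}f$ in $x$ and apply the parabolic rescaling $(\tilde x,\tilde y,\tilde t)=(Rx,Ry,R^2 t)$, together with $(\tau,\eta)=(k,\xi)/R$ on the frequency side, to realise $e^{it\Delta_\TmRn}f$, up to normalising factors, as the Fourier extension
\begin{equation*}
E_{\mathbb{P}^{m+n}_{R^{-1}}}g(\tilde x,\tilde y,\tilde t)
=\sum_{\tau\in(R^{-1}\Zm)\cap[-1,1]^m}\int_{[-1,1]^n}g(\tau,\eta)e^{i(\tau\tilde x+\eta\tilde y-\tilde t(|\tau|^2+|\eta|^2))}\,d\eta,
\end{equation*}
with $g(\tau,\eta)=R^n\hat f(R\tau,R\eta)$. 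The Fourier support of this extension is a disjoint union of $R^{-1}$-separated $n$-dimensional sheets of the paraboloid $\mathbb{P}^{m+n}$, one for each admissible $\tau$.

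Next, I would smoothly thicken and decouple. Multiplying by a Schwartz bump $\phi(R^{-2}\cdot)$ whose Fourier transform lives in a ball of radius $O(R^{-2})$ produces a function with Fourier support in $\mathcal{N}_{R^{-2}}(\mathbb{P}^{m+n})$, to which Proposition~\ref{BDv2} with $\delta=R^{-1}$ applies. The crucial geometric observation is that the $R^{-1}$-spacing of the sheets exactly matches the $R^{-1}$-scale of the decoupling caps: after a suitable choice of cover, each cap $\theta\in\mathcal{P}^{m+n}_{R^{-1}}$ contains exactly one sheet together with one $R^{-1}$-cube in the $\eta$-direction. Unwinding the rescaling, each such $\theta$ then corresponds to a unique $K\in\Zm\times\Z^n$, and the decoupled piece is (a spatial localisation of) $e^{it\Delta_\TmRn}\square_K f$. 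Since $\tfrac{m+n}{2}-\tfrac{m+n+2}{p}=(m+n)(\tfrac12-\tfrac1p)-\tfrac2p$, the factor produced by Proposition~\ref{BDv2} is exactly $R^\alpha\cdot R^\varepsilon$, and $R^\varepsilon$ is absorbed by the margin $\alpha>(m+n)(\tfrac12-\tfrac1p)-\tfrac2p$.

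To close the argument I would verify the uniform single-piece bound $\|e^{it\Delta_\TmRn}\square_K f\|_{L^p(\TmRn\times[0,1])}\le C\|\square_K f\|_{L^p(\TmRn)}$. Since $\sigma_K$ isolates the single integer frequency $k=K_1$ and a fixed unit cube in $\xi$ around $K_2$, one has $\square_K f(x,y)=e^{iK_1\cdot x}h(y)$ with $\hat h$ supported in that unit cube, so that $e^{it\Delta_\TmRn}\square_K f=e^{-it|K_1|^2}e^{iK_1\cdot x}e^{it\Delta_{\Rn}}h$. The operator $e^{it\Delta_{\Rn}}$ restricted to a unit frequency cube is a Fourier multiplier whose convolution kernel has $L^1$-norm uniformly bounded for $t\in[0,1]$, so Young's inequality gives the claim.

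The most delicate step will be the bookkeeping in the rescaling/thickening: one must verify that the integration region in the rescaled variables can be replaced (up to losses that can be absorbed into $R^\varepsilon$) by all of $\R^{m+n+1}$ as required by Proposition~\ref{BDv2}, and that the decoupling caps are in genuine one-to-one correspondence with the modulation indices $K$. Both facts hinge on the integer lattice spacing in the periodic direction matching the decoupling scale $\delta=R^{-1}$---the very observation that drives Barron's proof of Proposition~\ref{prop3.1}.
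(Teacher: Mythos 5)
Your proposal follows the paper's strategy essentially step for step: periodize and parabolically rescale to realize $e^{it\Delta_{\TmRn}}f$ as the extension operator $E_{\mathbb{P}^{m+n}_{R^{-1}}}$ applied to (the rescaled) $\hat f$, thicken by a Schwartz bump so that the Fourier support lies in $\mathcal{N}_{R^{-2}}(\mathbb{P}^{m+n})$, apply Proposition~\ref{BDv2} at scale $\delta=R^{-1}$ (with the observation that the sheet spacing and cap scale coincide), unwind the rescaling so that each cap corresponds to a modulation piece $\square_K f$ (up to a bounded overlap factor rather than literal bijection, but this is harmless for the $\ell^2$-sum), and verify the arithmetic $\frac{m+n}{2}-\frac{m+n+2}{p}=(m+n)(\frac12-\frac1p)-\frac2p$. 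This is exactly the route taken in the paper's proof, equations~\eqref{eq4.2}--\eqref{eq4.4}.

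The one genuinely different ingredient is your treatment of the single-piece bound. The paper appeals to the fixed-time estimates \eqref{Mi} and \eqref{temp} (together with \eqref{commute}) to get $\|e^{it\Delta_{\TmRn}}\square_K f\|_{L^p}\le CP(t)\|\square_K f\|_{L^p}$ with $P$ polynomial, whereas you exploit the sharper structural fact that $\sigma_K$ picks out the single lattice frequency $k=K_1$, so $\square_K f(x,y)=e^{iK_1\cdot x}h(y)$ and the problem collapses to the Euclidean multiplier $e^{-it|\xi|^2}\chi(\xi)$ on a unit cube, whose kernel is bounded in $L^1$ by Young's inequality. This is a cleaner and more self-contained argument. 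One small caution: after the cutoff $\varphi(t/R^2)$ and unrescaling, the time integral runs over all of $\R$, not just $[0,1]$; so you should state the kernel bound as $\|k_t\|_{L^1}\le C(1+|t|)^N$ for some $N$ and then absorb the polynomial into the rapid decay of $\varphi$ — exactly the point the paper addresses by noting that $P(t)$ is a polynomial. With that adjustment, your argument is complete and correct.
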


\begin{proof} 	Let $\hat{g}(k,\xi)=\hat{f}(Rk,R\xi)$ and $\hat{g}_K(k,\xi)=\widehat{\square_K f}(Rk,R\xi)$, then by a rescaling we have that
	\begin{align}\label{eq4.2}
		\big\|e^{it\Delta_{\mathbb T^m\times\mathbb R^n}} f\big\|_{L^p(\TmRn\times [0,1])}= R^{n-\frac{2m+n+2}{p}} \|E_{\mathbb{P}^{m+n}_{R^{-1}}}\hat{g}\|_{L^p([0,2\pi R^2]^m\times\Rn\times [0,R^2])}.
	\end{align}
	Let $\varphi$ be a Schwartz function on $\mathbb R$ with $\supp\hat{\varphi}\subset (-1/2,1/2)$ 
	such that $\varphi(t)\ge 1,\, t\in[-\pi,\pi]$. And let $\Phi(x)= \varphi(x_1)\varphi(x_2)\cdots\varphi(x_m)$. Then   the Fourier transform of
	$$
	F(x,y,t):= \Phi\bigg(\frac{x}{R^2}\bigg) \varphi\bigg(\frac{t}{R^2}\bigg)E_{\mathbb{P}^{m+n}_{R^{-1}}}\hat{g}(x,y,t)
	$$
	is supported in the $R^{-2}$ neighborhood of $\mathbb{P}^{m+n}_{R^{-1}}$, which is a subset of $ \mathcal{N}_{R^{-2}}(\mathbb{P}^{m+n})$. So we can apply Proposition~\ref{BDv2} to $F$ and obtain for any $\alpha>(m+n)(1/2-1/p)-2/p$,
	\begin{align}\label{eq4.3}
		\|E_{\mathbb{P}^{m+n}_{R^{-1}}}\hat{g}\|_{L^p([0,2\pi R^2]^m\times\Rn\times [0,R^2])}  \leq CR^\alpha \bigg( \sum_{K\in\Z^{m+n}}   \left\|\Phi\bigg(\frac{x}{R^2}\bigg) \varphi\bigg(\frac{t}{R^2}\bigg)E_{\mathbb{P}^{m+n}_{R^{-1}}} \widehat{g_K}\right\|_{L^p(\mathbb R^{m+n+1})}^2 \bigg)^{1/2}   
	\end{align}
	To calculate the $L^p$ norm on the right side, it suffices to consider $K=0$ the origin after a translation. But it is clear by \eqref{eq4.2} that
	\begin{align}\label{eq4.4}
		\bigg\|\Phi\bigg(\frac{x}{R^2}\bigg) \varphi\bigg(\frac{t}{R^2}\bigg)E_{\mathbb{P}^{m+n}_{R^{-1}}} \widehat{g_0}\bigg\|_{L^p(\mathbb R^{m+n+1})}    
&= R^{\frac{2m+n+2}{p}-n} \big\|\Phi(x)\varphi(t) e^{it\Delta_{\mathbb T^m\times\mathbb R^n}} (\square_0 f)\big\|_{L^p(\R^{m+n+1})}           \\
&\leq C R^{\frac{2m+n+2}{p}-n} \|\square_0 f\|_{L^p(\TmRn)}          \notag
	\end{align}
	since $\Phi$ and $\varphi$ are of rapid decay and $\widehat{\square_0}f$ is supported in the unit cube.  Clearly speaking, by \eqref{Mi} and \eqref{temp}, we have
	\begin{align*}
		\big\| e^{it\Delta_\TmRn} (\square_0 f)\big\|_{L^p(\TmRn)}\leq C(m,n,p) P(t) \| \square_0 f\|_{L^p(\TmRn)},
	\end{align*}
	where $P(t)$ is a polynomial. This makes sure the integral with respect to $t$ is convergent
	\begin{align*}
		\big\|\varphi(t) e^{it\Delta_{\mathbb T^m\times\mathbb R^n}} (\square_0 f)\big\|_{L^p(\TmRn\times\R)}
		&= \bigg(\int_\R \varphi(t)^p \big\| e^{it\Delta_{\mathbb T^m\times\mathbb R^n}} (\square_0 f)\big\|_{L^p(\TmRn)}^p  dt\bigg)^{1/p}   \\
		&\leq \| \square_0 f\|_{L^p(\TmRn)}.
	\end{align*}
In conclusion, combining all the estimates  \eqref{eq4.2}, \eqref{eq4.3} and \eqref{eq4.4}, we finally obtain 
\begin{align}\label{LF}
	\big\|e^{it\Delta_\TmRn}f\big\|_{L^p(\mathbb{T}^m\times \mathbb R^n\times [0,1])}\leq C(m,n,p,\alpha) R^\alpha \bigg( \sum_{K\in\Z^{m+n}} \| \square_K f\|_{L^p(\TmRn)}^2 \bigg)^{1/2}
\end{align}
for $f$ with $\text{supp\,}\hat{f}\subset \big(\Z^m\cap [-R,R]^m\big)\times [-R,R]^n$. This completes the proof of Proposition~\ref{Propq2}.
\end{proof}

\begin{proof}[Proof of Theorem \ref{th1.2}]
	First we show (a), (b) and (c) in Theorem \ref{th1.2} for $q=2$.
	Recall that  $P_j$ is the Littlewood-Paley operator on $\TmRn$  in Proposition \ref{AC2v2}. From the Littlewood-Paley  estimate, Minkowski's inequality and Proposition \ref{Propq2}, we obtain		
		\begin{eqnarray}\label{m2}
			\|e^{it\Delta_\TmRn}f\|_{L^p(\mathbb{T}^m\times \mathbb R^n\times [0,1])}
			&\leq& C \bigg\|\big(\sum_{j=0}^{\infty} |e^{it\Delta_\TmRn}P_jf|^2\big)^{1/2} \bigg\|_{L^p(\mathbb{T}^m\times \mathbb R^n\times [0,1])}      \nonumber   \\
			&\leq& C\bigg(\sum_{j=0}^{\infty} \|e^{it\Delta_\TmRn}P_jf\|_{L^p(\mathbb{T}^m\times \mathbb R^n\times [0,1])} ^2\bigg)^{1/2}       \nonumber   \\
			&\leq& C(m,n,p,\alpha) \bigg( \sum_K \langle K\rangle^{2\alpha}\| \square_K f\|_{L^p(\TmRn)}^2 \bigg)^{1/2}  \nonumber\\
			&=& C(m,n,p,\alpha) \|f\|_{M^\alpha_{p,2}(\TmRn)},
		\end{eqnarray}
		where $\alpha>0$ for $2\leq p \leq 2+ 4/(m+n)$; and $\alpha>(m+n) (1/2-1/p) -2/p +\epsilon$ for $ 2+ 4/(m+n)<p\leq \infty$.
		
		Next we consider \eqref{LSM} for $q=1$. It  is clear  that for any $p\geq 1$,  
		\begin{eqnarray}\label{e11}
			\|e^{it\Delta_{\mathbb T^m\times\mathbb R^n}}f\|_{L^p(\TmRn\times [0,1])}
			 &\leq& C\sum_K \|e^{it\Delta_{\mathbb T^m\times\mathbb R^n}}\square_K f\|_{L^p(\TmRn\times [0,1])} \nonumber \\         
			&\leq& C\sum_K \|\square_K f\|_{L^p(\TmRn)}.      
		\end{eqnarray}
	For $1\leq q\leq 2$, (a) follows from  \eqref{m2} and interpolating with \eqref{e11},  and for $q\geq 2$ we use the embedding 
	 \eqref{e1.16}. (b) follows for $q\geq 2$ via \eqref{e1.16}. (c) follows from interpolating (b) for $q=2$ with \eqref{e11}.  
	The proof of Theorem~\ref{th1.2} is complete.		
\end{proof}



\section{Sharpness: Proof of Proposition~\ref{prop1.3}}\label{sec-nec}
\setcounter{equation}{0}

The proof of part (i) of Proposition~\ref{prop1.3} is an adaptation of the corresponding result for $\mathbb R^n$ by Rogers \cite{Ro}.
Let $\phi$ be a Schwartz function on $\mathbb R$ with $\text{supp}\hat\phi\subset (-1,1)$, such that 
$\hat\phi(\xi)\ge 0,\,\xi\in (-1,1)$, and such that $\hat\phi(\xi)\ge 1,\,\xi\in (-1/2,1/2)$. For $h\in(0,1/2)$, let
$$\phi_h(y)=\frac{1}{\sqrt h}\phi\Big(\frac yh\Big),
\quad y\in\mathbb R, $$
and let
$$\widetilde\phi_h(x)
=\sqrt h\sum_{k\in\mathbb Z} \hat\phi(hk)e^{ikx},
\quad x\in\mathbb T.$$
Note that $\phi_h$ and $\widetilde\phi_h$ are normalized in $L^2$.
For $m, n\ge 1$, let
$$\Phi_h(y)=\phi_h(y_1)\cdots \phi_h(y_n),\quad y=(y_1,\cdots,y_n)\in\mathbb R^n,$$
and let
$$\widetilde\Phi_h(x)=\widetilde\phi_h(x_1)\cdots\widetilde\phi_h(x_m),\quad
x=(x_1,\cdots,x_m)\in\mathbb T^m.$$
By the semiclassical dispersion estimate of  Burq, G\'erard and Tzvetkov \cite[Lemma 2.5]{BuGeTz}, we have
$$\big\|e^{i \varepsilon_0 h\Delta_{\mathbb T}}\widetilde\phi_h\big\|_{L^\infty(\mathbb T)}\le C$$
for a sufficiently small constant $\varepsilon_0$. Hence
$$\big\|e^{i \varepsilon_0 h\Delta_{\mathbb T^m}}\widetilde\Phi_h\big\|_{L^\infty(\mathbb T^m)}
=\big\|e^{i \varepsilon_0 h\Delta_{\mathbb T}}\widetilde\phi_h\big\|_{L^\infty(\mathbb T)}^m\le C.$$
Since $e^{i t\Delta_{\mathbb T^m}}$ is $2\pi$-periodic in $t$, it follows that
\begin{equation}\label{disperse1}
\big\|e^{-i (2\pi-\varepsilon_0 h)\Delta_{\mathbb T^m}}\widetilde\Phi_h\big\|_{L^\infty(\mathbb T^m)}\le C.
\end{equation}
On the other hand, since $2\pi-\varepsilon_0 h\approx 1$, by standard dispersion estimate (cf. \cite{Ro}, Section 2),
we have
\begin{equation}\label{disperse2}
\big\|e^{-i (2\pi-\varepsilon_0 h)\Delta_{\mathbb R^n}}\Phi_h\big\|_{L^\infty(\mathbb R^n)}
=\big\|e^{-i (2\pi-\varepsilon_0h)\Delta_{\mathbb R}}\phi_h\big\|_{L^\infty(\mathbb R)}^n
\le C h^{\frac n2}.
\end{equation}
Therefore, setting
$$f(x,y)= e^{-i(2\pi-\varepsilon_0 h) \Delta_{\mathbb{T}^m}}\widetilde\Phi_h(x)\cdot e^{-i(2\pi-\varepsilon_0 h) \Delta_{\R^n}}\Phi_h(y),$$
it follows from Corollary \ref{cor}(i), \eqref{disperse1}, and \eqref{disperse2} that
\begin{align}\label{est4}
	\|f\|_{W^{\alpha,p}(\mathbb{T}^m\times \R^n)}\leq C h^{-\alpha}h^{n(\frac12-\frac1p)}.
\end{align}

Notice that for $|\tau|\le \varepsilon_0 h^2$ and $|x|, |y|\le \varepsilon_0 h$ (decreasing $\varepsilon_0$ if necessary), by Fourier inversion, we have
$$
	\big|e^{i\tau  \Delta_{\mathbb{T}^m}}\widetilde\Phi_h(x)\big|\ge C h^{-\frac m2},\quad
\big|e^{i\tau \Delta_{\R^n}}\Phi_h(y)\big|\geq C h^{-\frac n2}.
$$
Thus, for $t\in 2\pi-\varepsilon_0 h+[0,\varepsilon_0 h^2]$ and $|x|, |y|\le \varepsilon_0 h$,
$$
	\left|e^{it \Delta_{\mathbb T^m\times\mathbb R^n}} f(x,y)\right|=
	\big|e^{i(t-1+\varepsilon_0 h)  \Delta_{\mathbb{T}^m}}\widetilde\Phi_h(x)\big|\cdot
\big|e^{i(t-1+\varepsilon_0  h) \Delta_{\R^n}}\Phi_h(y)\big|
	\geq C h^{-\frac {m+n}{2}}.
$$
After integration, this implies
\begin{align}\label{est5}
	\big\| e^{it\Delta_{\mathbb T^m\times\mathbb R^n}}f \big\|_{L^q(\mathbb{T}^m\times \R^n, L^r[0,1])}\geq C h^{-(m+n)(\frac12-\frac1q)} h^{\frac 2r}.
\end{align}
Combining (\ref{est4}) and (\ref{est5}), and letting $h\to 0^+$, we see that for  (\ref{eq:LS}) to hold, one must have
$$
\alpha
\ge
(m+n)\Big(\frac12-\frac1q\Big)
+n\Big(\frac 12-\frac1p\Big)
-\frac2r.
$$

Next we prove part (ii) of Proposition~\ref{prop1.3}.	Let $\widetilde\Phi_h, \Phi_h$ be as above. Set $f(x,y)=\widetilde\Phi_h(x)\Phi_h(y)$. Since $|\square_K f|\approx h^{(m+n)/2}|\widehat{\sigma_K}|$ for $|K|\leq h^{-1}$, we have
	\begin{align}\label{eq5.7}
		\|f\|_{M^\alpha_{p,q}}
		&\leq C\bigg(\sum_{|K|\leq h^{-1}} h^{-\alpha q}h^{\frac{(m+n)q}{2}}\bigg)^{1/q}    \\
		&= C h^{(m+n)(\frac12-\frac1q)-\alpha}.      \notag
	\end{align}
	On the other hand, for $0\leq t\leq \varepsilon_0 h^2$ and $|x|,|y|\leq \varepsilon_0 h$, 
	$$
	\left|e^{it \Delta_{\mathbb T^m\times\mathbb R^n}} f(x,y)\right|
	=\big|e^{it  \Delta_{\mathbb{T}^m}}\widetilde\Phi_h(x)\big|\cdot
	\big|e^{it \Delta_{\R^n}}\Phi_h(y)\big|
	\geq C h^{-\frac {m+n}{2}}. 
	$$
	Thus
	\begin{align}\label{eq5.8}
		\big\| e^{it\Delta_{\mathbb T^m\times\mathbb R^n}}f \big\|_{L^p(\mathbb{T}^m\times \R^n\times [0,1])}\geq C h^{-(m+n)(\frac12-\frac1p)+\frac2p}.
	\end{align}
	Combining \eqref{eq5.7} and \eqref{eq5.8}, and letting $h\to 0^+$, we obtain the necessary condition
	$$
	\alpha\geq (m+n)\Big(1-\frac 1p-\frac 1q\Big)-\frac2/p.
	$$
	For the condition $\alpha\ge 0$, choose $K_0$ such that $h^{-1}/2\leq |K_0|\leq h^{-1}$, and set $\hat{f}= \sigma_{K_0}$. Since $\|f\|_{M^\alpha_{p,q}}\leq C h^{-\alpha}$,\;  $\big\| e^{it\Delta_{\mathbb T^m\times\mathbb R^n}}f \big\|_{L^p(\mathbb{T}^m\times \R^n\times [0,1])}\geq C$,  \eqref{eq:modulationn} immediately implies $\alpha\geq 0$.
	Summarizing, for \eqref{eq:modulationn} to hold, one must have
	$$
	\alpha\geq \max\left\{0,(m+n)\bigg(1-\frac1p-\frac1q\bigg)-\frac2p\right\}.
	$$
 This proves part (ii) of  Proposition~\ref{prop1.3}, and the proof of  Proposition~\ref{prop1.3} is complete.

\begin{remark}	 
 By taking $p=q$ and $r=\infty$ in the proof of part (i) above, one sees that the 
 fixed-time estimate (\ref{fixtime}) cannot hold uniformly near $t=2\pi$ when $\alpha<(m+2n)(\frac12-\frac1p)$. On the other hand, it can be shown that when $t$ is a rational multiple of $2\pi$, the range in \eqref{fixtime} extends to $\alpha\ge 2n(\frac12-\frac1p)$ (cf. \cite{T}).

\end{remark}

 \noindent
{\bf Acknowledgments.} 
 The authors would like to thank S.M. Guo for helpful discussions.
X. Chen was supported by the NNSF of China, Grant Nos. 11901593 and 12071490. Z. Guo is supported by ARC DP200101065.
L. Yan was supported by  
National key R$\&$D program of China: No. 2022YFA1005700, the NNSF of China
11871480  and by the Australian Research Council (ARC) through the research
grant DP190100970.


\bigskip

\bibliographystyle{amsplain}

\end{document}